\newcommand{\papertitle}{Active Inference in String Diagrams: A Categorical Account of Predictive Processing and Free Energy}
\newcommand{\para}[1]{\paragraph{#1}}
\newcommand{\pearl}{Pearl~} 
\newcommand{\jeffrey}{Jeffrey~} 
\newcommand{\pup}[2]{#1^{}_P} 
\newcommand{\jup}[2]{#1^{}_J} 
\newcommand{\vup}[2]{#1_F} 
\newcommand{\ucond}[2]{#1|^{#2}} 
\newcommand{\iD}{\raisebox{0.125em}{\tiny $\blacktriangleright$}}
\newcommand{\cbb}[1]{   
\color{RoyalPurple}
\begin{itemize}[label=\iD,topsep=.5em,itemsep=.5em,leftmargin=*]
\item \textbf{#1}
    \begin{itemize}[label=-,itemsep=.2em,topsep=.2em,leftmargin=*]
}
\newcommand{\cbe}{        
    \end{itemize}
\end{itemize}
\color{black}
}
\newcommand{\totchan}[1]{#1} 
\newcommand{\prior}[1]{\sigma} 
\newcommand{\upd}[2]{\mathrm{update}({#1},{#2})} 
\newcommand{\Mq}{M_q}
\newcommand{\Mso}{\totchan{M}} 
\newcommand{\Ms}{\prior{M}}
\newcommand{\Mo}{\sigma'} 
\newcommand{\oMs}{M|_O}
\newcommand{\Mspost}{\upd{M}{\obs}}
\newcommand{\Mspostsharp}{\upd{M}{o}}
\newcommand{\pMso}{\totchan{M}}
\newcommand{\pMs}{\Ms} 
\newcommand{\pMspost}{\Mspost} 
\newcommand{\poMs}{\oMs}
\newcommand{\pMo}{\Mo}
\newcommand{\Minput}{\Input}
\newcommand{\Input}{I} 
\newcommand{\inp}{i} 
\newcommand{\Inpcfe}{I} 
\newcommand{\Incbn}{I} 
\newcommand{\monunit}{\mathsf{I}} 
\newcommand{\pMof}{M}
\newcommand{\Mpof}{M}
\newcommand{\qFE}{Q}
\newcommand{\EFS}{S}
\newcommand{\EFO}{O}
\theoremstyle{plain}
\newtheorem{Def}{Definition}
\newtheorem{definition}[Def]{Definition}
\newtheorem*{definition*}{Definition}
\newtheorem{theorem}[Def]{Theorem}
\newtheorem{proposition}[Def]{Proposition}
\newtheorem{lemma}[Def]{Lemma}
\newtheorem{remark}[Def]{Remark}
\newtheorem{corollary}[Def]{Corollary}
\newtheorem{example}[Def]{Example}
\newcommand{\lapprox}{\approx_l}
\newcommand{\Opres}{O}
\newcommand{\Ofuture}{F}
\newcommand{\Spres}{S}
\newcommand{\Sfuture}{S'}
\newcommand{\Mpres}{M_1}
\newcommand{\Mpreso}{M_1}
\newcommand{\Mfuture}{M_2}
\newcommand{\Mfuturef}{M_2}
\DeclareMathOperator{\DKL}{D}
\DeclareMathOperator{\He}{H}
\DeclareMathOperator{\Sp}{S}
\DeclareMathOperator{\FE}{FE}
\DeclareMathOperator{\VFE}{F}
\DeclareMathOperator{\EFE}{G}
\newcommand{\obs}{\textbf{o}} 
\newcommand{\bijointstate}{\omega}
\newcommand{\prefs}{C}
\newcommand{\habits}{E}
\newcommand{\softmax}{\sigma}
\newcommand{\ExpVal}{\mathop{\mathbb{E}}} 
\DeclareMathOperator{\norm}{Norm}
\newcommand{\Norm}[1]{\mathop{\norm}_{#1}}
\newcommand{\Gauss}{\cat{Gauss}}
\newcommand{\Pa}{\mathsf{Pa}}
\newcommand{\iset}[1]{\mathbf{#1}} 
\newcommand{\mech}[1]{\mathbb{#1}}
\newcommand{\model}[1]{\mech{#1}}
\newcommand{\modelM}{\model{M}}
\newcommand{\channel}{\mathrm{channel}}
\definecolor{darkblue}{rgb}{0.2,0.2,0.7}
\definecolor{darkgreen}{rgb}{0.0, 0.5, 0.0}
\definecolor{ForestGreen}{RGB}{34,139,34}
\definecolor{planning}{rgb}{0.2,0.2,0.7}
\newcommand{\rl}[1]{\textcolor{black} {#1}}
\newcommand{\Kl}{\mathrm{Kl}}
\newcommand{\FStoch}{\cat{FStoch}}
\newcommand{\MatR}{\cat{Mat}_{\mathbb{R}^+}}
\newcommand{\sem}[1]{\ensuremath{\llbracket #1 \rrbracket}}
\DeclareMathOperator{\ext}{ext}
\newcommand{\cat}[1]{\ensuremath{\mathbf{#1}}}
\newcommand{\catC}{\cat{C}}
\newcommand{\Rel}{\cat{Rel}}
\newcommand{\id}[1]{\ensuremath{\mathrm{id}_{#1}}}
\newcommand{\hilbH}{\mathcal{H}} 
\tikzstyle{every picture}=[baseline=-0.25em,scale=0.5]
\newenvironment{picc}[1][]
{\begin{aligned}\begin{tikzpicture}[font=\tiny,#1]}
{\end{tikzpicture}\end{aligned}}
\newenvironment{pic}[1][] {\begin{aligned}\begin{tikzpicture}[scale=2.0, font=\tiny,#1]}{\end{tikzpicture}\end{aligned}} 
\tikzstyle{whitedott}=[circle, draw=black, fill=white, inner sep=.4ex]
\tikzstyle{greydott}=[circle, draw=black, fill=black!25, inner sep=.4ex] 
\tikzstyle{blackdott}=[circle, draw=black, fill=black, inner sep=.4ex]
\tikzstyle{upgroundsmall}=[circuit ee IEC, thick, ground, rotate=90, scale=1.5, tikzit fill=black, tikzit shape=rectangle]
\newcommand{\discard}[1]{\ensuremath{\tinygroundnew_{#1}}}
\newcommand{\tinygroundnew}{
\smash{
{\hspace{-3pt}
\ensuremath{
\begin{picc}[scale=1.0] 
    \node[upgroundsmall, xscale=0.8, yscale=0.7] (1) at (0,0.16) {};
    \draw (0,0.03) to (0,-0.25);
\end{picc}
}\hspace{-1pt}}}}
\newcommand{\tinymultflip}[1][whitedott]{
\smash{\raisebox{-2pt}{\hspace{-5pt}\ensuremath{\begin{pic}[scale=0.4,yscale=1]
    \node (0) at (0,0) {};
    \node[#1, inner sep=1.5pt] (1) at (0,0.55) {};
    \node (2) at (-0.5,1) {};
    \node (3) at (0.5,1) {};
    \draw (0.center) to (1.center);
    \draw (1.center) to [out=left, in=down, out looseness=1.5] (2.center);
    \draw (1.center) to [out=right, in=down, out looseness=1.5] (3.center);
    \node[#1, inner sep=1.5pt] (1) at (0,0.55) {};
\end{pic}
}\hspace{-3pt}}}}
\newcommand{\tinycomult}[1][whitedott]{
\smash{\raisebox{-2pt}{\hspace{-5pt}\ensuremath{\begin{pic}[scale=0.4,yscale=1]
    \node (0) at (0,0) {};
    \node[#1, inner sep=1.5pt] (1) at (0,0.55) {};
    \node (2) at (-0.5,1) {};
    \node (3) at (0.5,1) {};
    \draw (0.center) to (1.center);
    \draw (1.center) to [out=left, in=down, out looseness=1.5] (2.center);
    \draw (1.center) to [out=right, in=down, out looseness=1.5] (3.center);
    \node[#1, inner sep=1.5pt] (1) at (0,0.55) {};
\end{pic}
}\hspace{-3pt}}}}
\newcommand{\tinycopy}{\tinycomult[whitedott]}
\newcommand{\tinycap}{\smash{\raisebox{-3pt}{\hspace{-2pt}\ensuremath{\begin{pic}[scale=0.2, yscale=-1]
   \pgftransformscale{1.5} \draw[scale = 1] (0,0) to[out=-90,in=-90,looseness=1.5] (1.5,0);
\end{pic}}}}}
\tikzstyle{label}=[font={\footnotesize}, text height=1ex, text depth=0.15ex]
\tikzstyle{map}=[draw, shape=rectangle, inner sep=2pt, minimum height=5mm, fill=white, minimum width=5mm]
\tikzstyle{medium map}=[draw, shape=rectangle, inner sep=2pt, minimum height=5mm, fill=white, minimum width=12mm, tikzit fill=red]
\tikzstyle{large map}=[draw, shape=rectangle, inner sep=2pt, minimum height=5mm, fill=white, minimum width=18mm, tikzit fill=blue]
\tikzstyle{scalar}=[circle, draw, inner sep=2pt, line width=0.7pt]
\tikzstyle{upground}=[circuit ee IEC, thick, ground, rotate=90, scale=1.5, tikzit fill=black, tikzit shape=rectangle]
\tikzstyle{downground}=[circuit ee IEC, thick, ground, rotate=-90, scale=1.5, tikzit fill=black, tikzit shape=rectangle]
\tikzstyle{downgroundnorm}=[circuit ee IEC, thick, ground, rotate=-90, scale=1.5, fill=white, tikzit shape=rectangle, tikzit fill=red]
\tikzstyle{point}=[fill=white, draw, shape=isosceles triangle, shape border rotate=-90, isosceles triangle stretches=true, inner sep=0.2pt, minimum height=0.8mm, yshift=-0.0mm, tikzit shape=rectangle, minimum width=0.5cm]
\tikzstyle{copoint}=[fill=white, draw, shape=isosceles triangle, shape border rotate=90, isosceles triangle stretches=true, inner sep=0.2pt, minimum width=0.5cm, minimum height=0.8mm, yshift=-0.0mm, tikzit shape=rectangle, minimum width=0.5cm]
\tikzstyle{wide point}=[fill=white, draw, shape=isosceles triangle, shape border rotate=-90, isosceles triangle stretches=true, inner sep=0.2pt, minimum height=0.8mm, yshift=-0.0mm, minimum width=12mm, tikzit shape=rectangle, tikzit fill=red]
\tikzstyle{wide copoint}=[fill=white, draw, shape=isosceles triangle, shape border rotate=90, isosceles triangle stretches=true, inner sep=0.2pt, minimum width=0.5cm, minimum height=0.8mm, yshift=-0.0mm, tikzit shape=rectangle, minimum width=12mm, tikzit fill=red]
\tikzstyle{whitedot}=[circle, draw=black, inner sep=.4ex, fill=white]
\tikzstyle{greydot}=[circle, draw=black, inner sep=.4ex, fill=grey]
\tikzstyle{blackdot}=[circle, draw=black, inner sep=.4ex, fill=black]
\tikzstyle{decomp}=[fill=white, draw, shape=isosceles triangle, shape border rotate=-90, isosceles triangle stretches=true, inner sep=0pt, minimum width=0.75cm, minimum height=4mm, yshift=-0.0mm, tikzit shape=rectangle]
\tikzstyle{decompwide}=[fill=white, draw, shape=isosceles triangle, shape border rotate=-90, isosceles triangle stretches=true, inner sep=0pt, minimum width=1.5cm, minimum height=4mm, yshift=-0.0mm, tikzit shape=rectangle, tikzit fill=red]
\tikzstyle{decompflip}=[fill=white, draw, shape=isosceles triangle, shape border rotate=90, isosceles triangle stretches=true, inner sep=0pt, minimum width=0.75cm, minimum height=4mm, yshift=-0.0mm, tikzit shape=rectangle]
\tikzstyle{decompwideflip}=[fill=white, draw, shape=isosceles triangle, shape border rotate=90, isosceles triangle stretches=true, inner sep=0pt, minimum width=1.5cm, minimum height=4mm, yshift=-0.0mm, tikzit shape=rectangle, tikzit fill=red]
\tikzstyle{sharpstate}=[point]
\tikzstyle{sharpeffect}=[copoint]
\tikzstyle{green dot}=[fill=white, draw={rgb,255: red,34; green,139; blue,34}, shape=circle, inner sep=.4ex]
\tikzstyle{greensharpstate}=[point, fill=white, draw={rgb,255: red,34; green,139; blue,34}]
\tikzstyle{grbox}=[fill=white, draw={rgb,255: red,34; green,139; blue,34}, shape=rectangle]
\tikzstyle{split node}=[fill=white, draw={rgb,255: red,255; green,155; blue,255}, shape=circle, dashed]
\tikzstyle{dir}=[->]
\tikzstyle{dashline}=[-, loosely dotted, thick]
\tikzstyle{triangle}=[->, style={{-{Triangle[open]}}}]
\tikzstyle{norm}=[-, dashed, color=blue, draw={rgb,255: red,51; green,0; blue,255}]
\tikzstyle{maptostyle}=[{|->}]
\tikzstyle{dashdir}=[->, dashed]
\tikzstyle{thickline}=[-, thick]
\tikzstyle{FCMBox}=[-, draw={rgb,255: red,191; green,191; blue,191}, thick]
\tikzstyle{green line}=[-, draw={rgb,255: red,34; green,139; blue,34}]
\tikzstyle{logbox}=[-, draw={rgb,255: red,1; green,162; blue,12}]
\tikzstyle{highlight}=[-, dashed=]
\tikzstyle{tikzfig}=[]
\tikzset{arrow/.style={decoration={
    markings,
    mark=at position #1 with \arrow{>[length=2pt, width=3pt]}},
    postaction=decorate},
    reverse arrow/.style={decoration={
    markings,
    mark=at position #1 with {{\arrow{<[length=2pt, width=3pt]}}}},
    postaction=decorate}
}
\newcommand{\TOBY}[1]{}
\title{\textbf{\papertitle}}
\author[1,2]{Sean Tull}
\author[2,3,4]{Johannes Kleiner}
\author[5,6]{Toby St Clere Smithe}
\affil[1]{Quantinuum}
\affil[2]{Association for Mathematical Consciousness Science}
\affil[3]{Ludwig Maximilian University of Munich}
\affil[4]{University of Bamberg}
\affil[5]{Topos Institute}
\affil[6]{VERSES Research}
\date{}
\begin{document}

\maketitle


\begin{abstract}
We present a categorical formulation of the cognitive frameworks of Predictive Processing (PP) and Active Inference, expressed in terms of string diagrams interpreted in a monoidal category with copying and discarding. This includes diagrammatic accounts of generative models, Bayesian updating, perception, planning, active inference, and free energy. In particular we present a diagrammatic derivation of the formula for active inference via free energy minimisation, and establish a compositionality property for free energy, allowing free energy to be applied at all levels of an agent's generative model. Aside from aiming to provide a helpful graphical language for those familiar with active inference, we conversely hope that this article may provide a concise formulation and introduction to the framework. 
\end{abstract}

\section{Introduction} \label{sec:intro}

\emph{Predictive processing} (PP) is a framework for modelling cognition and adaptive behaviour in both biological and artificial systems \cite{wiese2017vanilla,hohwy2020new}. A prominent sub-field is the programme of \emph{Active Inference}, developed by Friston and collaborators \cite{smith2022step,parr2022active,friston2017active,sajid2021active}, which aims to provide a unified understanding of cognition and action which can be applied at many levels, from a single neuron to an entire brain or organism. More specifically, active inference gives a proposal for how a cognitive agent represents its own beliefs about the world, how it updates these beliefs in light of new observations, and how it chooses the actions it takes, with the latter ultimately leading to new observations.



Central to the framework is that an agent possesses a \emph{generative model} which explains its observations causally in terms of both hidden states of the world and its own actions. Note that this model is internal to the agent, and typically distinct from the `true' causal process in the world which produces the observations. After receiving an observation, the agent may \emph{update} this generative model to determine likely hidden states which caused the observation (the process of perception) and choose its actions (the process of planning). In active inference, both forms of updating are carried out together through a form of approximate Bayesian inference, by minimising a quantity known as \emph{free energy} \cite{friston2006free,friston2010free}.

While active inference seeks a principled account of cognition, at present its formalisation can seem fairly complex, and there are various aspects which do not follow immediately from simply applying the definitions to a given generative model. Conceptually clear formal accounts of the framework would be desirable to simplify the theory and address these issues, as well as for applications within AI. 

One hope for such a formal account would be for it to be both \emph{compositional} and \emph{graphical}. Indeed the generative models in PP are highly structured, often given as `hierarchical models' \cite{de2017factor} which are best represented diagrammatically in terms of probabilistic graphical models such as Bayesian networks. 
While there has been support for, and steps towards, a graphical account of active inference \cite{friston2017graphical}, so far the graphical aspects only formally describe the structure of a generative model, while other aspects such as updating and free energy are still treated through traditional probabilistic calculations, and only informally in diagrams. 

Recently however, fully formal diagrammatic methods have been developed for both describing Bayesian networks and carrying out probabilistic reasoning about them. These approaches are based on (monoidal) \emph{category theory} and its associated graphical language of \emph{string diagrams} \cite{piedeleu2023introduction}. Category theory has been applied across the sciences as a general mathematics of interacting \emph{processes}, including within probability theory \cite{coecke2012picturing,cho2019disintegration}, causality \cite{jacobs2019causal,fritz2023d,lorenz2023causal}, game theory \cite{ghani2018compositional}, machine learning \cite{fong2019backprop,shiebler2021category}, quantum computing \cite{abramsky2004categorical} and natural language processing \cite{clark2008compositional}. In particular a major ongoing development is in the study of probabilistic processes in terms of \emph{cd-categories} (and `Markov categories'), which allow one to carry out probabilistic reasoning entirely through string diagrams \cite{fritz2020synthetic}. 

In this work we give a full categorical account of predictive processing and active inference in terms of string diagrams, interpreted in cd-categories. In doing so we aim to give a conceptually clear account of the main features of the framework: generative models, Bayesian updating (including with soft observations), perception, action planning, and their combination in active inference, and both \emph{variational} and \emph{expected} free energy. 

A highlight is a fully graphical derivation of the well-known formula for active inference in terms of minimisation of free energy. While this is a central result within active inference, its usual justification is more heuristic in nature. Here we instead derive the free energy formula purely graphically from a diagrammatic account of active inference itself, providing what we argue is the most transparent account of this result known so far. 

The categorical perspective also naturally leads us to consider more novel aspects of active inference. These include the definition of \emph{open generative models} (essentially from \cite{lorenz2023causal}) which are generative models coming with `inputs', allowing them to serve as the  building blocks of an overall generative model. 

We also introduce a notion of variational free energy for open models which allows us to establish the desirable property that free energy is \emph{compositional}. Namely, a system with an overall generative model composed from sub-models may minimise global VFE by minimising VFE locally within each component. This is a crucial fact in order to apply free energy as proposed to all levels of a system, say from a whole brain down to its individual neurons. 

Overall, we hope that our diagrammatic accounts of PP can provide a conceptually clear view of the framework, and also a natural language for reasoning within it. Indeed, as argued for example in \cite{lorenz2023causal} and elsewhere \cite{jacobs2019causal} diagrams in cd-categories provide a natural way to both represent causal (generative) models, as well as reason about them. As we demonstrate here they are also natural for describing the structure of active inference, including free energy. Aside from aiming to provide a helpful graphical language for those familiar with active inference, we conversely hope that this article may provide a succinct introduction to PP for those already familiar with string diagrams and categorical reasoning.  

\para{Further motivations}

Though primarily a framework for cognition, various proposals have been put forward for how predictive processing may be related to \emph{consciousness} \cite{wiese2017vanilla}. In previous work, two of the authors developed a categorical account of the \emph{Integrated Information Theory} of consciousness, again essentially using cd-categories \cite{TullKleiner,kleiner2021mathematical} and based on the work here we hope to give a categorical account of how consciousness may be accounted for within PP \cite{deane2021consciousness,hohwy2020predictive}. We also see this work as a piece of the programme of \emph{Compositional Intelligence}, which explores how categorically structured models and processes can be applied to (artificial) intelligence. Specifically, PP may be seen as a proposal for how 
compositional intelligence manifests in biology; that is, how biological systems may employ compositionality to carry out intelligent and adaptive behaviour.   

Active inference can also be understood as an alternate proposal to reinforcement learning (RL) for how agents can learn adaptive behaviour, and shares similar features including the role of probabliistic models and inference \cite{tschantz2020reinforcement}. It differs from conventional RL by replacing an explicit reward function with the aim of maximizing evidence for a probabilistic model, where the agent's preferences are now encoded in the model's prior distribution \cite{friston2009reinforcement}.

\para{Related work}

This work can be seen as a part of the growing field of `categorical cybernetics' \cite{smithe2021cyber,capucci2021towards}, including previous work from one of the authors on compositional accounts of Bayesian updating \cite{smithe2020bayesian} and of active inference in terms of `statistical games' \cite{smithe2021compositional,smithe2022compositional}. It differs from previous works by directly formalising the active inference framework itself, and by working explicitly graphically within the simple string-diagrammatic setting of cd-categories, with the aim of supplying a simple abstract characterization of active inference agents.

In this way the work is a part of a general movement in applying string diagrams in cd-categories to probability theory and causal reasoning. A categorical account of Bayesian inversion was first given by Coecke and Spekkens in \cite{coecke2012picturing}, and then within cd-categories by Cho and Jacobs \cite{cho2019disintegration}, with further developments in categorical probability by Fritz \cite{fritz2020synthetic}. Our diagrammatic account of generative models is precisely that given for causal models in part by one of the authors in \cite{lorenz2023causal}, which builds on the earlier categorical treatments of (causal) Bayesian networks by Fong \cite{fong2013causal}, Jacobs et al \cite{jacobs2019causal} and others e.g. \cite{fritz2023d}. Indeed, as an agent's explanation for the  observations it receives from the world, a generative model is ultimately a causal model \cite{pearl2009causality}, though this is not often stressed in the literature.  

The two forms of soft Bayesian updating treated here we first studied by Jacobs in \cite{jacobs2019mathematics}. The specific treatment of conditioning in cd-categories used here is from \cite{lorenz2023causal}. Cd-categories with (non-unique) conditionals have also been recently studied as `partial Markov categories' in \cite{di2023evidential}, along with both notions of updating. Our treatment of free energy refers to the KL divergence of distributions; we note that an axiomatic treatment of Markov categories coming with divergences on their morphisms has recently been given by Perrone in \cite{perrone2022markov}.

Within active inference itself, graphical aspects have been increasingly prominent, with discussion of the `graphical brain' in \cite{friston2017graphical}. In such works it is argued that one may describe models as (non-directed) Forney Factor Graphs (FFGs) \cite{de2017factor}. However, generative models are inherently directed, going from states to observations with the other direction being intractable to compute exactly. Thus it is more natural to treat models using (generalisations of) directed Bayesian networks. Nonetheless we note though that FFGs derived from a model still have a role when minimising VFE via `message passing' algorithms \cite{parr2019neuronal}.

\begin{figure} 
\[
\includegraphics[scale=0.12]{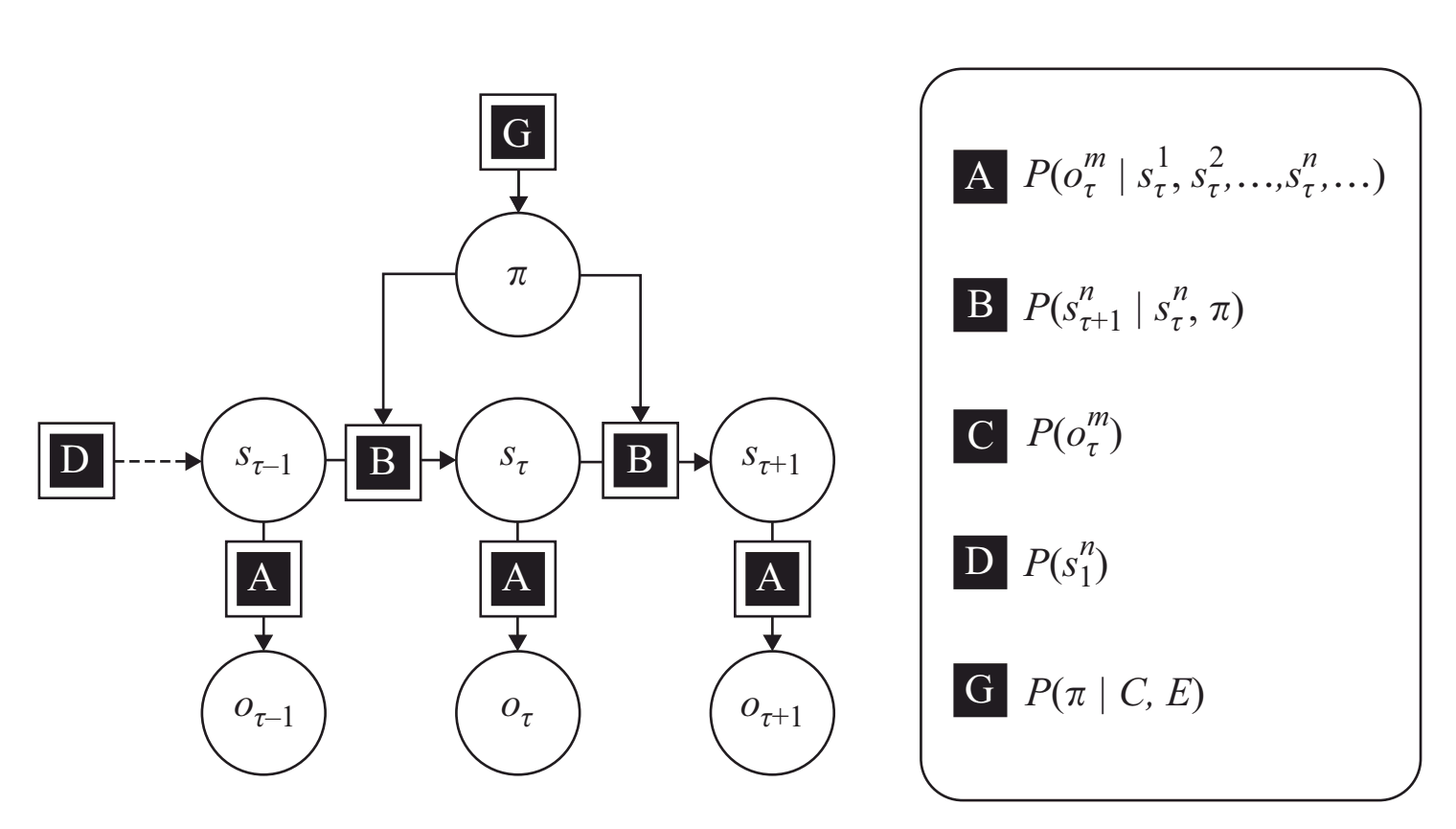}
\qquad 
\qquad 
\qquad
\tikzfig{sdiagex}
\]
\begin{caption} \ 
A generative model diagram from the recent book \emph{Active Inference} by Parr, Pezzulo and Friston \cite{parr2022active} (left) and the equivalent string diagram representation (right) (though replacing the informal EFE term $\EFE$ with the prior $E$).
\end{caption}
\label{Fig:diagcomparison}
\end{figure}

Interestingly, while a Bayesian network is typically depicted as a DAG (with only the variables labelled), one may argue the diagrams in active inference have been naturally `converging' on their string diagrammatic representation, which also includes labels on the channels; see Figure 1. We claim that the advantage of string diagrams beyond DAGs is in allowing one to both represent and reason about the model in the same formalism.  

We note that the diagrams in PP are at times only semi-formal, including aspects such as the free energy which are not strictly part of the generative model. One may see this work as a step towards the shared goal of representing formally all aspects of PP within one language of diagrams.

\para{Structure of the article}

We begin in Section \ref{sec:setup} by introducing cd-categories and their diagrammatic account of probability theory. We then apply these to introduce from scratch the key aspects of PP: generative models as Bayesian networks, and their generalisation to open generative models in a cd-category (Section \ref{sec:gen-models}), (Bayesian) updating of generative models from  observations (Section \ref{sec:updating} ), perception and planning (Section \ref{sec:percep-planning}) and their combination in exact active inference (Section \ref{sec:exact-act-inf}). We then discuss free energy (Section \ref{sec:free-energy}) and give a graphical derivation of active inference via free energy minimisation (Section \ref{sec:approx-actinf}). In Section \ref{sec:comp-FE} we then introduce free energy for open models using a graphical formalism of `log-boxes' and use this to establish the compositionality property of free energy. Finally we discuss future work in Section \ref{sec:futurework}.

\para{Acknowledgements} 
We thank Robin Lorenz for helpful discussions and development of the treatment of causal models used here.
This research was supported by grant number FQXi-RFP-CPW-2018 from the Foundational Questions Institute and Fetzer Franklin Fund, a donor advised fund of the Silicon Valley Community Foundation. Sean Tull would also like to thank Quantinuum for their generous support in this research. 
Johannes Kleiner would like to thank the Mathematical Institute of the University of Oxford for hosting him while working on this research.

\section{Categorical Setup} \label{sec:setup}

Let us begin by introducing the graphical treatment of probabilistic processes in terms of \emph{string diagrams}, developed by numerous authors \cite{coecke2012picturing,cho2019disintegration,fritz2020synthetic}. Formally, these correspond to working in a `monoidal category' or more specifically a `cd-category', but in practice one may avoid mathematical details and simply work with the diagrams themselves. Though cd-categories are very general, in this article it suffices to consider the category $\MatR$ of $\mathbb{R}^+$ valued finite matrices, introduced shortly in Example \ref{ex:MatR+}. 

A \emph{category} $\catC$ consists of a collection of \emph{objects} $X, Y, \dots$ and \emph{morphisms} or \emph{processes} $f \colon X \to Y$ between them, which we can compose in sequence. In string diagrams we depict an object $X$ as a wire and a morphism $f \colon X \to Y$ as a box with lower input wire $X$ and upper output wire $Y$, read from bottom to top. 
\[
\tikzfig{box} 
\]
Given another morphism $g \colon Y \to Z$ we can compose them to yield a morphism $g \circ f \colon X \to Z$, depicted as:
\[
\tikzfig{composite-1}
\]
Each object $X$ also comes with an \emph{identity} morphism $\id{X} \colon X \to X$ depicted as a blank wire:
\[
\tikzfig{identity}
\]
The identity leaves any morphism alone under composition, that is $\id{Y} \circ f = f = f \circ \id{X}$ for any $f \colon X \to Y$. 

Formally, a \emph{symmetric monoidal category} $(\catC, \otimes, I)$ is a category $\catC$ with a functor $\otimes \colon \catC \times \catC \to \catC$, and natural transformations which express that $\otimes$ is suitably associative and symmetric, with a distinguished unit object $\monunit$  \cite{coecke2006introducing}. All of these aspects however are expressed most simply in diagrams.

Firstly, the \emph{tensor} $\otimes$ allows us to compose any pair of objects $X, Y$ into an object $X \otimes Y$, depicted by placing wires side-by-side. 
\[
\tikzfig{tensor-ob}
\]
Given morphisms $f \colon X \to W$ and $g \colon Y \to Z$ we can similarly form their `parallel composite' $f \otimes g \colon X \otimes Y \to W \otimes Z$ as below. 
\[
\tikzfig{tensor}
\]
In text we will at times omit the tensor symbols and write e.g. `$f$ from $X$ to $Y, Z$' or $f \colon X \to Y, Z$ in place of $f \colon X \to Y \otimes Z$. 

The tensor is symmetric so we can `swap' pairs of wires past each other, such that swapping twice returns the identity, and boxes carry along the swaps as below.
\[
\tikzfig{symmetry}
\]
We also have a distinguished \emph{unit object} $\monunit$ whose identity morphism we depict simply as empty space, and denote by $1$. 
\begin{equation} \label{eq:empty-space}
\tikzfig{idI}  \ \ \quad =   \ \ \quad 1 
\end{equation} 
Intuitively, tensoring any object by the unit simply leaves it invariant. The unit allows us to consider morphisms with `no inputs' and/or `no outputs' in diagrams. A morphism $\omega \colon \monunit \to X$ is called a \emph{state} of $X$, depicted with no input. An \emph{effect} on $X$ is a morphism $e \colon X \to \monunit$, depicted with no output. A morphism $r \colon \monunit \to \monunit$, drawn with no input or output, is called a \emph{scalar}. 
\[
\tikzfig{stateomega} \qquad \tikzfig{effect}  \qquad \tikzfig{scalar}
\]
In particular the `empty space' diagram \eqref{eq:empty-space} is the scalar $1 = \id{\monunit}$.  

The compositions $\circ, \otimes$ satisfy axioms which must be considered when working symbolically but are trivial in the graphical language. An example is associativity of composition $(h \circ g) \circ f = h \circ (g \circ f)$, which is automatic from simply drawing three boxes in sequence on the same wire. Similarly the rule $(f \otimes g) \circ (f' \otimes g') = (f \circ f') \otimes (g \circ g')$, displayed in the left identity below, and the `interchange law' $(f \otimes \id{})\circ(\id{}\otimes g) = f \otimes g = (\id{} \otimes g)\circ(f\otimes\id{})$, displayed in the right identity below, amount to letting us freely slide boxes along wires. 
\[
\tikzfig{functoriality2} \qquad \qquad \qquad 
\tikzfig{interchange2} 
\]
Let us now introduce our primary example category in this article. 

\begin{example} \label{ex:MatR+}
In the category $\MatR$ of positive valued matrices, the objects are finite sets $X,Y,\dots$ and the morphisms $M \colon X \to Y$ are functions $M \colon X \times Y \to \mathbb{R}^+$ where $\mathbb{R}^+ := \{r \in \mathbb{R} \mid r \geq 0 \}$. Equivalently such a function is given by an `$X \times Y$ matrix' with entries $M(y \mid x) := M(x,y) \in \mathbb{R}^+$ for $x \in X$, $y \in Y$.  
\[
\tikzfig{mat-mor} 
\quad   :: \   (x,y) \  \mapsto \ M(y \mid x)
\]
Composition of $M \colon X \to Y$ and $N \colon Y \to Z$ is given by summation over $Y$: 
\[
\tikzfig{mat-comp}
\quad   :: \   (x,z) \  \mapsto \ \sum_{y \in Y} N(z \mid y) M(y \mid x)
\]
The tensor $\otimes$ is given on objects by the Cartesian product $X \otimes Y = X \times Y$, and on morphisms by the Kronecker product, i.e. the usual tensor product of matrices:
\[
\tikzfig{mat-tens} 
\quad :: \  ((x,y),(w,z)) \  \mapsto \ M(w \mid x)N(z \mid y)
\]
The symmetry is simply the isomorphism $X \times Y \simeq Y \times X$. The unit object $\monunit=\{\star\}$ is the singleton set. A state of $X$ is then equivalent to a positive function on $X$:
\[
\tikzfig{state}  \quad   :: \    x \  \mapsto \ \omega(x)
\]
where $\omega(x) := \omega(x \mid \star)$. Similarly, an effect $e$ on $X$ is also equivalent to a positive function on $X$ via $e(x) := e(\star \mid x)$.
\[
\tikzfig{effect}  \quad   :: \    x \  \mapsto \ e(x)
\]
Finally, a scalar is precisely a positive real $r \in \mathbb{R}^+$.
\end{example} 

\subsection{Cd-categories} 

Many aspects of probability theory can be treated entirely diagrammatically, by noting that categories such as $\MatR$ come with the following further structure. 

\begin{definition} \cite{cho2019disintegration} \label{def:cd_category}
A \emph{cd-category} (\emph{copy-discard category}) is a symmetric monoidal category in which each object comes with a specified pair of morphisms
\[
\tikzfig{copy-delete}
\]
called \emph{copying} and \emph{discarding}, respectively, which satisfy the following:
\[
\tikzfig{markov-axioms}
\]
The choice of these morphisms is moreover `natural' in that the following hold for all objects $X, Y$.
\begin{equation} \label{eq:nat-rules}
\tikzfig{copy-nat} \qquad \qquad \tikzfig{disc-nat} \qquad \qquad \tikzfig{disc-I}
\end{equation}
\end{definition}

Thanks to these axioms for copying, we can unambiguously define a copying morphism with $n$ output legs, for any $n \geq 1$, via: 
\[
\tikzfig{copyn}
\]
with the $n=0$ case defined to be discarding $\discard{}$. 

The presence of discarding allows us to identify the truly `probabilistic' processes in a cd-category. We say that a morphism $f$ is a \emph{channel} when it preserves discarding, as below. 
\[
\tikzfig{causal2}
\]
In particular, we call a state $\omega$ \emph{normalised} when the following holds. For an explanation of why this gives the usual definition, cf. Example~\ref{ex:MatR+cont} below.
\[
\tikzfig{state-norm}
\]
Here we will often call a normalised state $\omega$ of $X$ a \emph{distribution} of $X$, even when working in a general cd-category \footnote{This is to avoid confusion with the usual use of the term (hidden) `state' in PP.}. We also call a normalised state of $X \otimes Y$ a \emph{joint distribution} over $X, Y$.

A cd-category in which every morphism is a channel, or equivalently $\discard{}$ is the unique effect on any object, is called a \emph{Markov category} \cite{fritz2020synthetic}. Given any cd-category $\catC$, its subcategory $\catC_\channel$ of channels always forms a Markov category. 

Discarding allows us to `ignore' certain outputs of a process. Given any morphism $f$ from $X$ to $Y, Z$, its \emph{marginal} $X \to Y$ is the following morphism:  
\[
\tikzfig{marginalmor} 
\]
Let us see how these features describe discrete probability theory within our example category.

\begin{example}\label{ex:MatR+cont}
$\MatR$ is a cd-category. Copying on $X$ is given $\tinymultflip[whitedot](y,z \mid x) = \delta_{x,y,z}$ with value $1$ iff $x=y=z$ and $0$ otherwise. Discarding $\discard{}$ on $X$ is given by the function with $x \mapsto 1$ for all $x \in X$. Hence a state $\omega$ is normalised, i.e. forms a distribution on $X$, precisely when it forms a \emph{probability distribution} over $X$ in the usual sense, i.e.~its values sum to $1$. 
\[
 \tikzfig{state-sum}
 \ \ = \ \  
\sum_{x \in X} \omega(x)  \ \ = \  \ 1
\]

More generally, a process $M \colon X \to Y$ is a channel iff it forms a \emph{probability channel}, or equivalently a \emph{stochastic} matrix, meaning that it sends each $x \in X$ to a normalised distribution $M(y \mid x)$ over $Y$.  Indeed we have that: 
\[
\tikzfig{m-disc}  \ \ ::  \ \ x  \ \mapsto \ \sum_y M(y \mid x)
\]
Hence $M$ is a channel iff this effect is constant at $1$, i.e. for all $x$ we have \[\sum_{y \in Y} M(y \mid x) = 1\]

In typical probability theory, such a channel is also often called a `conditional probability distribution' $P(Y \mid X)$ with values denoted $P(y \mid x) := P(Y=y \mid X =x)$ for $x \in X$, $y \in Y$.  The subcategory of channels in $\MatR$ is the Markov category $\FStoch$ of finite Stochastic matrices. 

Let us see how a few features of probability theory appear in diagrams. Firstly, for any $X, Y$, a distribution $\omega$ on $X \otimes Y$ corresponds to a joint distribution over $X, Y$ (left-hand below). In particular given a pair of distributions $\phi, \sigma$ over $X, Y$, the distribution $\phi \otimes \sigma$ corresponds to the resulting product distribution over $X \times Y$, with $X$ and $Y$ independent from each-other (right-hand below).
\[
\tikzfig{joint-state}
\qquad \qquad \qquad 
\tikzfig{prod-state}
\]
A general channel as below represents a probability channel \rl{$P(Y_1,\dots,Y_m \mid X_1,\dots,X_n)$. }
\[
 \tikzfig{MXY} 
\]
Marginalisation of any morphism corresponds to the usual notion in probability theory, given by summation over the discarded object.
\[
\tikzfig{marginal2} :: x \mapsto \sum_{y \in Y} \omega(x,y)
\qquad \qquad \qquad 
\tikzfig{margM} :: (x,y) \mapsto \sum_{z \in Z}  M(y,z \mid x)
\]
Finally we observe that for any effect $e \colon X \to \mathbb{R}^+$ and distribution $\omega$ the scalar $e \circ \omega$ corresponds to the expectation value of the function $e$ according to the probability distribution $\omega$. 
\[
\mathop{\mathbb{E}}_{x \sim \omega} e(x)  \ \ = \ \   \tikzfig{exp-value}  \ \ = \ \ \sum_{x \in X} e(x) \omega(x)
\]
\end{example}

\subsection{Sharp states and caps} 

The copying morphisms in a cd-category allow us to identify those states which are really `deterministic' \cite{fritz2020synthetic}. We call a state $x$ \emph{sharp}, and depict it with a triangle as below, when it is copied by $\tinycopy$, that is:
\begin{equation} \label{eq:copy-points}
\tikzfig{copy-points}
\end{equation}
In many categories there is a corresponding effect for each state, playing an important role for sharp states, thanks to the following feature. We say that $\catC$ has \emph{caps} when each object comes with a distinguished effect on $X \otimes X$ depicted $\tinycap$ and satisfying: 
\begin{equation}\label{eq:mult-map}
\tikzfig{cap-sym} \qquad \qquad \ \ \ 
\tikzfig{cap-2}  \ \ \ \qquad \qquad  \tikzfig{mult-map2} 
\end{equation}
and such that the following holds for all objects $X, Y$:
\[
\tikzfig{capXYrule} 
\]
Intuitively, the cap is an effect which checks if its two input wires are `in the same state'. The first equation in \eqref{eq:mult-map} expresses that this comparison is symmetric, and the remaining two that it is compatible with copying; for example the second says that each input when copied is equal to its copy. 

Practically, caps allow us to `turn outputs into inputs'. In particular, for each state $\omega$ we can define a corresponding effect by `flipping $\omega$ upside-down':
\begin{equation} \label{eq:eff-from-state}
\tikzfig{cap-4}
\end{equation}
When $\omega =x $ is a sharp state, we call this effect \emph{sharp} also. One may verify that it is the unique effect satisfying the following. 
\begin{equation} \label{eq:sharp-state-eff}
\tikzfig{sharp-1} \qquad \qquad 
\tikzfig{sharp-2}
\end{equation}
Caps are particularly useful in diagrammatic reasoning when they are \emph{cancellative}, meaning that: 
\[ 
\tikzfig{capcancel} \label{eq:cancel-caps}
\]
for all morphisms $f, g$.

\begin{example}
$\MatR$ has cancellative caps. Each point $x \in X$ corresponds to a normalised sharp state on $X$ which we again denote by $x$, given by the point probability distribution $\delta_x$ at $X$. 
\[
\tikzfig{sharp-state-x} :: \ y \mapsto 
\begin{cases} 
1 & x=y \\ 
0 & \text{otherwise} 
\end{cases} 
\]
The corresponding effect is given by the function $\delta_x$ also. Each cap is given by \ $\tinycap(x,y) = \delta_{x,y}$. We note a useful fact that for any morphism $M \colon X \to Y$ its values $M(y \mid x)$ can be given diagrammatically as below. 
\[
\ M(y \mid x)  \ \ \ =  \ \ \ \tikzfig{mat-values}  
\]
Every sharp state on $X$ is of the above form for some $x \in X$, or else given by the \emph{zero state} $0$ defined by $0(x) = 0$ for all $x \in X$. The only sharp scalars are $0$ and $1$. Note that a general state $\omega$, even when normalised, is not copyable. 
\[
\tikzfig{noncopy}
\] 
Indeed the left-hand side is the distribution $(x,y) \mapsto \omega(x) \delta_{x,y}$, while the right is $(x,y) \mapsto \omega(x) \omega(y)$, which differ unless $\omega$ is zero or $\omega = \delta_x$ for some $x \in X$. 
\end{example}

\subsection{Normalisation} 

In graphical probabilistic reasoning it is also useful to be able to normalise states and processes. We say that a cd-category $\catC$ has \emph{normalisation} when it comes with a rule assigning each morphism $f \colon X \to Y$ a new morphism called the \emph{normalisation} of $f$, depicted by drawing a dashed blue box:
\begin{equation} \label{eq:norm-box}
\tikzfig{normmap}
\end{equation}
such that these normalisations satisfy various axioms, of which we sketch a few here. For a full definition see \cite{lorenz2023causal}. Firstly, a general state $\omega$ is equal to a scalar multiple of its normalisation. In particular in $\MatR$ when the state is non-zero, this means that its normalisation is indeed normalised in our above sense, i.e. a distribution. 
\begin{equation} \label{eq:state-norm}
\tikzfig{state-norm-cond}
\end{equation}
For a general morphism $f$ its normalisation is given on each sharp state $x$ by normalising $f \circ x$.
\begin{equation} \label{eq:norm-on-states}
\tikzfig{norm-det}
\end{equation}
These two rules combine to give the following equation without explicit reference to states. 
\begin{equation} \label{eq:norm-sup-cond}
\tikzfig{normsupcond1}
\end{equation}
Note that if $f$ we already a channel then it would be equal to its normalisation, as in this case we can passing the discarding through $f$ and then the copy map above. In general normalisations satisfy a few graphical conditions including the following.
\begin{equation} \label{eq:norm-axioms} 
\tikzfig{norm-monoidal}
\qquad \qquad 
\tikzfig{copynorm}
\end{equation} 
Further, for all morphisms $f$ and channels $g$ we have:
\begin{equation} \label{eq:norm-axioms-more}
\tikzfig{normchan3}
\end{equation}
and for all sharp states $x$ and morphisms $f$ we have the following. 
\[
 \tikzfig{norm-det-wide}
\]

For a full account of the properties of normalisation see \cite{lorenz2023causal}. We note that for a general morphism $f$, its normalisation is not necessarily a channel but only a `partial channel' \footnote{Such morphisms are called 'quasi-total' in \cite{di2023evidential}, where morphisms satisfying \eqref{eq:norm-sup-cond} are also called `normalisations', though are not uniquely chosen unlike our definition.}. In terms of states, this is because its sends each sharp state $x$ either to a normalised state, or else to $0$ if $f \circ x = 0$. However in $\MatR$ it will be a channel provided $f$ has `full support', so that $f \circ x$ is non-zero for all non-zero sharp states $x$. 

 Throughout the article, the following notation will be useful. For any set $X$ and function $f \colon X \to \mathbb{R}^+$ let us write 
\[
\Norm{x} f(x) := \frac{f(x)}{\sum_{x' \in X}f(x')}
\] 
whenever this is well-defined, i.e. the denominator is finite and non-zero.

\begin{example}
$\MatR$ has normalisation. On each object $X$ the zero state $0$, given by $0(x) = 0$ for all $x \in X$, is defined to have normalisation $0$. For any non-zero state $\omega$ we indeed have 
\[
\tikzfig{normomega}  \quad   :: \    x \  \mapsto \Norm{x} \omega(x)
\]
For a general morphism $M \colon X \to Y$ the normalisation is given by:
\[
\tikzfig{normmat} \quad   :: \  (x, y) \mapsto  \ 
\begin{cases} 
\Norm{y} M(y \mid x)  & \text{ if }
\sum_{y \in Y} M(y \mid x) \neq 0  \\ 
0 & \text{otherwise}
\end{cases}
\]
As a result if $M$ has full support, so that $M(y \mid x) \neq 0$ for some $y$, for all $x$, then its normalisation is a probability channel. 
\end{example}

\subsection{Further cd-categories}

Though we will not need them here, we note that the notion of a cd-category is much more general than $\MatR$, and give a few examples for those familiar with them. The category $\Rel$ whose objects are sets and morphisms are relations is a cd-category, as are its subcategories $\cat{PFun}$ of sets and partial functions and $\cat{Set}$ of sets and functions, with the latter forming the channels in $\cat{PFun}$.

There are also many more cd-categories of a `probabilistic' nature, see for instance \cite{panangaden1998probabilistic,cho2019disintegration,fritz2020synthetic}. In particular to treat general probability spaces (including `continuous probability channels') one may work in the category $\Kl(G)$ of measurable spaces $X=(X,\Sigma_X)$ and Markov (sub-)kernels $f \colon X \to Y$, which send each $x \in X$ to a (sub-)probability measure $f(x)$ over $Y$. Roughly, this means replacing all instances of summation $\Sigma$ in $\MatR$ above with integration $\int$. Of particular interest in PP is the following subcategory, though we will not work with it in detail in this article.

\begin{example} \cite[Section 6]{fritz2020synthetic}
In the category $\Gauss$ the objects are spaces $X = \mathbb{R}^n$ and morphisms $M \colon X \to Y$ are Markov kernels $f \colon X \to Y$ with densities of the form $f(y \mid x) =\eta(y - Mx)$ for some fixed Gaussian noise distribution $\eta$ (independent of $x$) and linear map $M \colon X \to Y$. This category models linear processes with Gaussian noise. More general non-linear Gaussian processes are studied in PP under the so-called `Laplace assumption'. 
\end{example}

\section{Generative Models} \label{sec:gen-models}

A central feature in PP is that each cognitive agent possesses a generative model which describes their internal beliefs about how the observations they receives arise from hidden states of the world\footnote{Note that is distinct from whatever `true' external process produces the observations in reality, with the latter often called the `generative process' to distinguish it from the agent's own `generative model' \cite{parr2022active}.}. In its simplest form, a generative model consists of a channel $c \colon S \to O$ describing how likely $c(o \mid s)$ a given observation $o \in O$ is for each hidden state $s \in S$, along with a distribution $\sigma$ over $S$ describing prior beliefs about how likely each state is. 


However, generative models typically come with further compositional structure, relating various spaces of observations and hidden states, as formalised by a \emph{Bayesian network} (or more precisely a \emph{causal} Bayesian network, see later discussion), a probabilistic graphical model based on a directed acyclic graph (DAG). There is in fact a close correspondence between DAGs and cd-categories, allowing us to describe and study such models entirely in terms of string diagrams. This view also leads one to consider more general `open generative models', which may come with `input' variables. These open models can be used to which describe the individual components of an overall generative model in the usual sense. For more details on the approach used here, see \cite{lorenz2023causal}.



We begin by relating DAGs with the following class of string diagrams. 

\begin{definition} \label{Def_NetworkDiagram}
\cite{lorenz2023causal}
A \emph{network diagram} is a string diagram $D$ built from single-output boxes, copy maps and discarding:
\[
\tikzfig{nd-box}
\qquad 
\tikzfig{nd-copy}
\qquad 
\tikzfig{nd-disc}
\] 
with labellings on the wires, such that any wires not connected by a sequence of copy maps are given distinct labels, and each label appears as an output at most once and as an input to any given box at most once. 
\end{definition}

 Such diagrams are best understood by examples, which we come to shortly. Before this, we note that network diagrams are in fact equivalent to DAGs in the following sense. By an \emph{open DAG} we mean a finite DAG $G$ with vertices $V=\{X_1,\dots,X_n\}$, along with subsets $I, O \subseteq V$ of \emph{input} and \emph{output} vertices, respectively, such that each input vertex has no parents in $G$. 

Given any open DAG $G$, we may construct an equivalent network diagram featuring a box $c_i$ with output $X_i$ for each non-input vertex $X_i$. The box $c_i$ itself has an input wire for each parent of $X_i$ in $G$. In the diagram we copy the output of this box and pass it to each of the children of $X_i$, as well as an extra time if $X_i \in O$ i.e. $X_i$ is an output vertex of the DAG. 
\[
\tikzfig{DAG-extract}
\qquad 
\mapsto
\qquad 
\tikzfig{DAG-extractsd}
\]

By construction, this yields a network diagram $D_G$ from the inputs $I$ to the outputs $O$ of the DAG. Conversely, given any such network diagram $D$ we define an open DAG $G_D = (G,I,O)$ with a vertex $X \in V$ for each wire $X$ in $D$, and with $X \in I, O$ iff it is an input (resp. output) to the diagram.

In practice the labellings of the boxes are arbitrary, and we consider any two network diagrams equivalent when they are the same up to the equations of a cd-category and box re-labellings. Then the above yields a one-to-one correspondence between open DAGs and network diagrams \cite[Sections 3,5]{lorenz2023causal}. 

\begin{example} \label{ex:DAG}
Consider the open DAG $G$ over $\{X_1,X_2,X_3,X_4\}$ below, with output vertices $O=\{X_2, X_3\}$ circled, and with no input vertices. The equivalent network diagram $D_{G}$ is shown to the right. Note that the labels of the boxes are arbitrary. 
\[
\tikzfig{Graph-obs}
\qquad 
\iff 
\qquad 
\tikzfig{state-GO}
\]
\end{example} 

\begin{example} \label{ex:ODAG}
The following depicts an open DAG over $V = \{X_1,\dots,X_5\}$ with outputs $O=\{X_3, X_5\}$ and with inputs $I = \{X_2,X_3\}$ highlighted with special incoming arrows. To the right we show the corresponding network diagram with the same inputs and outputs.  
\[
\tikzfig{open-DAG} 
\qquad 
\iff 
\qquad 
\tikzfig{open-DAG-sd}
\]
\end{example}


We may now define generative models themselves, which involve specifying actual channels corresponding to the boxes in the network diagram. 

An \emph{interpretation} $\sem{-}$ of a network diagram $D$ in a cd-category $\catC$ consists of specifying an object $\sem{X_i}$ for each wire $X_i$ and channel $\sem{f} \colon \sem{X_1} \otimes \dots \otimes \sem{X_k} \to \sem{X}$ for each box $f$ in $D$ with inputs $X_1,\dots, X_k$ and output $X$.

\begin{definition}  \cite{lorenz2023causal}
Let $\catC$ be a cd-category. An \emph{open generative model} in $\catC$ is given by a network diagram $D$ along with an interpretation $\sem{-}$ in $\catC$. We call the objects corresponding to output wires \emph{observed} and the rest \emph{hidden}. We call such a model \emph{closed} when it has no inputs. 
\end{definition}

Note that an object of an open model may be both an input and output. In practice, we omit the $\sem{-}$ symbols and for each wire $X$ in the network diagram of a model denote the corresponding object $\sem{X}$ in $\catC$ also by $X$. Similarly for each box $c$ in the diagram with output $X$ we also write $c$ for the corresponding channel $\sem{c}$.

\begin{remark} 
Formally, an open generative model in our sense is the same as an \emph{open causal model} in the sense of \cite{lorenz2023causal}; that is, both have the same mathematical definition. However a `generative model' typically refers to a causal model with the extra interpretation of being possessed by a cognitive agent. 

Indeed, though not often stressed in the literature, a typical generative model in PP may be seen as a \emph{causal} Bayesian network, i.e. a \emph{causal model} in the sense of Pearl \cite{pearl2009causality}. This means that the probability channels which constitute the network do not represent arbitrary relationships but in fact (beliefs about) causal ones, such as how observations are caused by (rather than merely correlated with) hidden states of the world. For more discussion see Section \ref{sec:futurework}.
\end{remark}

Given any open generative model $\modelM$ we obtain an overall channel from its inputs to its outputs by composing the channels of the model, i.e. viewing the (interpreted) network diagram as a single channel in $\catC$. Often it is useful to also consider the following related channel. 


\begin{definition} 
Let $\modelM=(D,\sem{-})$ be an open generative model in $\catC$ with inputs $\Input$and outputs $O$. Let $S$ denote the non-input hidden (non-output) objects of the model. The \emph{total channel} $M$ of the model is the channel from $\Input$ to $S, O$:
\begin{equation} \label{eq:total-channel}
\tikzfig{total-channel2}
\end{equation}
given by interpreting the network diagram $D'$ in which we modify $D$ by adding an extra copy morphism to each object in $S$, to make it an output. 
\end{definition} 

Conversely, the usual channel from inputs to outputs is then simply the marginal over $S$: 
\begin{equation} \label{eq:total-vs-overall}
\tikzfig{totalvsoverall2}
\end{equation}
In particular for a closed generative model, with no inputs, we call the total channel the \emph{total distribution} of the model. It is a joint distribution over the hidden objects $S$ and observed objects $O$: 
\begin{equation} \label{eq:total-state} 
\tikzfig{total-state2} 
\end{equation}
with the original distribution over the observed objects as its marginal. 
\begin{equation} \label{eq:overall-state}
\tikzfig{overall-channel3}
\end{equation}

Let us now consider generative models in our main example category. 

\begin{example} \label{ex:CBN}
A closed generative model $\modelM$ in $\MatR$ is precisely a \emph{Causal Bayesian Network (CBN)}. This consists of specifying: 
\begin{itemize}
\item 
a finite DAG $G$with a subset $O \subseteq V$ of `observed' vertices and the remaining $S = V \setminus O$ being `hidden'; 
\item 
for each vertex $X_i$ an associated variable with a finite set of values also denoted $X_i$, and a mechanism $c_i$ given by a probability channel with density: 
\begin{equation} \label{eq:mechanism-density}
P(X_i \mid \Pa(X_i))
\end{equation}
The term `causal' refers to the fact each such mechanism has a causal interpretation. 
\end{itemize} 
Indeed, as we have seen, such a DAG $G$ with outputs $O$ is equivalent to a network diagram $D$ with no inputs. Specifying an interpretation of $D$ is then the same as choosing the sets $X_i$ of values and channels \eqref{eq:mechanism-density} for each box in the diagram. A CBN defines a joint distribution\footnote{
Often a Bayesian network is instead defined as a distribution $P(V)$ satisfying the \emph{Markov condition} \eqref{eq:markov} in terms of its conditionals. Since these conditionals may not be unique, and the channels $c_i$ are an important component of the model, we instead include the latter explicitly; for more discussion see \cite{lorenz2023causal}. 
} over all the variables $V=\{X_1,\dots,X_n\}$ with density 
\begin{equation} \label{eq:markov}
P(V) := \prod^n_{i=1} P(X_i \mid \Pa(X_i))
\end{equation}
which is precisely \eqref{eq:total-state}. The output distribution of the CBN is given by the marginal $P(O)$ over only the observed variables, corresponding to \eqref{eq:overall-state}. 
\end{example}

\begin{example}
An open generative model $\modelM$ in $\MatR$ is an `open CBN', where now for the input variables no channel \eqref{eq:mechanism-density} is specified. This induces via \eqref{eq:total-channel} the total channel
\[ 
P(S, O \mid \Incbn)
\]
from the inputs to the non-input hidden variables $S$ and output variables $O$, which here we would denote (the entries of) by $M(s, o \mid \inp)$. Its marginal $P(O \mid \Incbn)$ on the observed variables $O$ yields the channel $M(o \mid \inp)$ from \eqref{eq:total-vs-overall}. 
\end{example}

In short, a (closed) generative model in $\MatR$ specifies the internal structure of an output distribution $P(O)$ in terms of further variables and channels \eqref{eq:mechanism-density}, while an open generative model similarly specifies the internal structure of a channel $P(O \mid I)$ from inputs $I$ to outputs $O$. 


For the remainder of this section we will describe some of the common forms of (open) generative models which appear in PP. 

\subsection{Simple generative models} \label{sec:simplegenmodel}
By a \emph{generative model $S \to O$} we mean a generative model $\modelM$ with network diagram: 
\[
\tikzfig{GM-1}
\]
Thus $\modelM$ consists of objects $S, O$ with $O$ observed and $S$ hidden, a channel $c \colon S \to O$, called the \emph{likelihood}, and a distribution $\sigma$ on $S$, called the \emph{prior}. As alluded to earlier, we call $S$ the \emph{hidden states} and $O$ the \emph{observations} of the model. The total distribution of the model is given by 
\begin{equation} \label{eq:GM-total-state}
\tikzfig{total-state}
\end{equation}

More generally, we can consider an \emph{open} variant of such a generative model which now comes with a hidden object $\Input$ of \emph{inputs}, with the following network diagram:
\begin{equation} \label{eq:simple-open-model}
\tikzfig{simple-open-model}
\end{equation} 
Hence both the prior and likelihood now take an additional $\Input$ input. The total channel is given by 
\begin{equation} \label{eq:OGM-total-channel}
\tikzfig{total-channel-of-model}
\end{equation}
Intuitively, such an open model $\modelM$ consists of specifying a particular generative model $S \to O$ for each input in $\Input$.

\begin{example}
A generative model $S \to O$ in $\MatR$ consists of a finite set $S$ of hidden states, $O$ of observations, a likelihood channel $c(o \mid s)$ and prior distribution $\sigma(s)$. Often would often write $c(o \mid s)$ as simply $P(o \mid s)$ and $\sigma(s)$ as $P(s)$. 
We interpret $c(o \mid s)$ as the probability of observing $o$ when in the hidden state $s$. Then the total state \eqref{eq:GM-total-state} is the joint distribution over $S, O$ given by 
\[
M(s,o) = c(o \mid s) \sigma(s)
\]
and typically simply denoted $P(s,o)$. As the notation suggests $P(o \mid s)$ is the conditional and $P(s)$ the marginal of the joint distribution $P(s,o)$.

When the generative model is open as in \eqref{eq:simple-open-model} it now comes with a finite set $\Input$ of input values, with likelihood  $c(o \mid \inp, s)$ and prior $\sigma(s \mid \inp)$. The total channel \eqref{eq:OGM-total-channel} is then given by 
\[
M(s, o \mid \inp) = c(o \mid \inp,s)\sigma(s \mid \inp)
\]
Thus for each input $\inp$ we obtain a generative model $\modelM(\inp)$ of the form $S \to O$ and an induced joint distribution $M(\inp)$ over $S, O$.
\end{example}

\subsection{Discrete time models}

For a given $n \in \mathbb{N}$, a \emph{discrete time generative model} is a closed generative model $\modelM$ of the form 
\[
\tikzfig{GM3-simpler}
\]
Thus it consists of observed objects $O_1,\dots, O_n$, hidden objects 
$S_1, \dots, S_n$, a \emph{prior} distribution $D$ over $S_1$, \emph{observation} channels $\{A\colon S_t \to O_{t}\}^n_{t=1}$ and \emph{transition} channels $\{B\colon S_{t} \to S_{t+1}\}^{n-1}_{t=1}$. 
Typically we mean that there are fixed objects $S, O$ such that $S_j = S, O_j = O$ for all $j$, and similarly as our notation suggests all the $A$ and $B$ channels for each time step are taken to be identical.

We interpret the model $\modelM$ as describing the evolution of a system over discrete time steps from $t=1,\dots,n$. The system begins in its initial state with prior distribution $D$ and then evolves over each time step according to the transition channels $B$. Independently, we observe the system at each time $t$ via the channel $A$ to produce an observation in $O_t$.  

\begin{example}
A discrete time generative model in $\MatR$ is also called a  \emph{Hidden Markov Model}  or \emph{partially observable Markov decision process (POMDP)} \cite{parr2022active}.
\end{example}

\subsection{Policy models} \label{sec:disc-time-policies}

We now introduce an explicit ingredient whereby the agent can model its own \emph{actions}. As in reinforcement learning  \cite{tschantz2020reinforcement}, a choice of actions or behaviour is called a \emph{policy}. In a discrete time setting, a policy can be thought of as determining likely sequences of actions over the time steps, which in turn influence the evolution of the states over time. 

An \emph{$n$-time step model with policies} is a generative model $\modelM$ of the form:
\[
\tikzfig{GEN-models-4}
\]
Thus it now includes a hidden object $P$ of \emph{policies} which forms an input to each transition channel $B$ from $S_t, P$ to $S_{t+1}$, for $t \leq n-1$. The model also comes with a prior distribution $\habits$ over $P$, which are called the \emph{habits} of the system. Note that here the policy the system is undertaking is considered hidden.

Again we typically take $S_j = S$ and $O_j  = O$ for some fixed objects $S, O$, with all channels $A$ identical and all $B$ channels identical. 

\begin{example}
Models of this form, within $\MatR$, are the central examples used in the active inference tutorial \cite{smith2022step} and book \cite{parr2022active}.
\end{example}

\subsection{Hierarchical models} 

Central to much of PP is the study of \emph{hierarchical} generative models \cite{de2017factor,parr2022active}, which have a natural graphical description. These are generative models given by composing various open generative models in layers, where the outputs of the open models in one layer match the inputs of the models in the next layer, such as in the example below. 
\begin{equation} \label{eq:hierarchical-model}
\tikzfig{hierarchy-again}
\end{equation}
Here it is understood that each box $M_j$ represents an open generative model, which we may decompose further in terms of its own network diagram with inputs and outputs as shown. The right-hand labels indicate that the input wire  to $M_1$ has type $S^{(0)}$, the output wires from $M_1$ both have type $S^{(1)}$ etc \footnote{It is also common to introduce a labelling convention for the wires such as $S^{(0,1,2)}$ where the indices represent wire numbers in each layer as we read up the diagram. However this quickly becomes unwieldy, and in most cases the graphical description of the network is the most convenient.   
}. 

We interpret the inputs to each (box within a) layer as a `control' signal from the layer below. Note that because we read diagrams bottom to top, the layers further down the diagram are in fact those usually referred to as more `high-level' or `higher' in the hierarchy. 

The structure of the model tells us that the `high-level' features cause the generation of the `lower-level' features. For example $S^{(0)}$ could describe an overall action policy while the $S^{(3)}$ control more fine-grained motor actions. Another common example explored in \cite{de2017factor} is where the output wires from each box denote individual time steps. In this case time runs faster in the lower-level layers (higher in the diagram). For example in the diagram above six time steps occur in layer $S^{(3)}$ for every time step in layer $S^{(1)}$.   

Plugging in the network diagrams for each open model corresponding to $M_j$ yields a composite network diagram for the whole hierarchical model. For example in the following hierarchical model, the network diagrams for $M_1$ and  $M_2$ are shown in the highlighted boxes below and compose to yield the diagram on the right-hand side. 
\[
\tikzfig{simple-hierarchy2}
\]
Much of the PP literature concerns such hierarchical models and the passing of these `top-down predictions' (the flow of information up the diagram in this case) are adjusted by `bottom-up errors' passed back down the model. The latter takes place when a model is \emph{updated}, which we address next.

\section{Updating Models} \label{sec:updating} 

Consider an agent with be a simple generative model $\modelM$ of the form $S \to O$ as in Section \ref{sec:simplegenmodel}. Recall that this induces a joint distribution $\Mso$ over $S, O$ as in \eqref{eq:total-state}, whose marginal on $S$ is the prior $\sigma$ describing `beliefs' about how likely each state in $S$ is to occur.
\[
\tikzfig{prior-marginal}
\]
Now suppose the agent receives an observation, which in general may be `soft', given by an distribution $\obs$ over $O$. The agent would like to \emph{update} these beliefs to obtain a new \emph{posterior} distribution over $S$, describing how likely each $s \in S$ now is given the observation.
\[
\tikzfig{Mso}
\  \ , \  \ 
\tikzfig{obs}
 \ \quad 
\mapsto 
 \ \quad 
\tikzfig{posterior}
\]
How then should the agent update the marginal on $S$ to yield this posterior? 
For a general soft observation with distribution $\obs$ over $O$ there are at least two distinct but natural ways to carry out Bayesian-style updating, as pointed out by Jacobs in \cite{jacobs2019mathematics}, which we describe in this section. When the observation $\obs$ is sharp, however, corresponding to a single element $o \in O$, there is a canonical way to carry out this belief updating, usually simply referred to as \emph{Bayesian} updating, which we introduce first. 

\subsection{Sharp Updating} 

Let us begin by describing updates with respect to a sharp observation, given by (a point distribution at) an element $o \in O$. Such Bayesian updating is closely related to the notion of \emph{conditional} probabilities, which have a nice characterisation in cd-categories. Here we follow the approach to conditioning from \cite{lorenz2023causal}, building on earlier treatments \cite{coecke2012picturing,cho2019disintegration,fritz2020synthetic}; see also \cite{di2023evidential}.

\begin{definition} 
Let $\catC$ be a cd-category, and $\bijointstate$ a joint distribution over $X, Y$. Then  \emph{a conditional} of $\bijointstate$ by $Y$ is a morphism $\bijointstate|_Y \colon Y \to X$ such that the following holds: 
\begin{equation} \label{eq:is-a-conditional}
\tikzfig{Bayesianomega}
\end{equation}
where $\sigma$ is the marginal of $\bijointstate$ on $Y$. 
If $\catC$ has normalisation and cancellative caps, we define \emph{the (minimal) conditional} to be the morphism:
\[
\tikzfig{mincondomega}
\] 
\end{definition}

Each minimal conditional is indeed a conditional as shown in the Appendix of \cite{lorenz2023causal}. As we saw for normalisations, a conditional is only a partial channel in general, being a channel only when $\bijointstate$ has `full support'.

\begin{example}
In $\MatR$ the minimal conditional $\bijointstate|_Y$ is given by 
\[
\bijointstate|_Y(x \mid y) := \Norm{x} \bijointstate(x,y) = \frac{\bijointstate(x,y)}{\sum_{x'} \bijointstate(x',y)}
\]
whenever the sum in the denominator is non-zero, and $\bijointstate|_Y(x \mid y) = 0$ for all $x$ otherwise. Thus when $\bijointstate$ is normalised with density denoted $P(X,Y)$ this is the usual conditional $P(X \mid Y)$. The condition \eqref{eq:is-a-conditional} amounts to the usual `chain rule' $P(x,y) = P(x \mid y)P(y)$ for the probability distribution $P(x,y) = \omega(x,y)$, since $\sigma(y)$ is the marginal $P(y)$ and we have:
\[
\omega(x,y) 
\ \ = \ \ 
\tikzfig{chainrule}
\ \ = \ \  
\bijointstate|_Y(x \mid y) \sigma(y)
\]
\end{example}

For a generative model $\modelM$ of the form $S \to O$ with joint distribution $M$ over $S, O$ we call the minimal conditional $M|_O \colon O \to S$ the \emph{Bayesian inverse} of the model. It specifies how to update beliefs about $S$ for each specific sharp observation $o \in O$. Explicitly, given a sharp distribution $\obs=\delta_o$ over $O$ for some $o \in O$ the updated beliefs are given by the posterior:
\begin{equation}
\label{eq:sharp-update-one}
\tikzfig{sharpupdate}
\end{equation}

\begin{example}
In $\MatR$, for a sharp observation $\delta_o$ for some $o \in O$, the posterior is the distribution over $S$ given by the usual Bayesian update:
\begin{equation} \label{eq:sharp-updating-fla}
M(s \mid o) = \frac{M(s, o)}{\sum_{s'} M(s',o)}
\end{equation}
\end{example}



\subsection{\pearl and \jeffrey Updating}

There are two distinct ways to generalise updating to the case of a soft observation given by a distribution $\obs$ over $O$, described in \cite{jacobs2019mathematics}. Diagrammatically these correspond to generalising from either the former or latter diagrams in  \eqref{eq:sharp-update-one}. For more on both forms of updating in cd-categories see also the treatment by Di Lavore and Román  \cite{di2023evidential}. 

\begin{definition} \label{Def:jeffpearl}
Let $\catC$ be a cd-category with normalisation and cancellative caps, and $M$ a joint distribution over $S, O$. Given a distribution $\obs$ over $O$, the \emph{\jeffrey update} denoted $\jup{M}{\obs}$ or $M|_{\obs}$ is given by the composite $M|_O \circ \obs$, i.e.: 
\[
\tikzfig{jeff-state}
\]
whenever this is normalised, and more generally is given by the normalisation of the above state. The \emph{\pearl update} denoted $\pup{M}{\obs}$ or $M|^{\obs}$ is instead given by the normalisation: 
\[
\tikzfig{pearl-update}
\]
recalling that the effect $\obs$ is given by composing $\obs$ with a cap as in \eqref{eq:eff-from-state}.
\end{definition}

\begin{example}
For a generative model $\modelM$ from $S$ to $O$ in $\MatR$, with joint distribution $M$ over $S, O$, the \jeffrey update is given by 
\begin{align}
\jup{M}{\obs}(s) \label{eq:Jeffrey-update-explicit}
= \ExpVal_{o \sim \obs} \Norm{s}  M(s,o) 
=
\sum_{o } \frac{M(s, o) \obs(o)}{\sum_{s'} M(s',o)} 
\end{align}
while the \pearl update is 
\begin{align}
\pup{M}{\obs}(s)  \label{eq:Pearl-update-explicit}
 = \Norm{s}  \ExpVal_{o \sim \obs} M(s,o)
=
\frac{\sum_{o } M(s, o) \obs(o)}
{\sum_{s',o'} M(s',o') \obs(o')} 
\end{align}
\end{example}

The distinction between both update procedures is not always considered in the literature since for sharp observations they coincide with the usual Bayesian update. Indeed the following is immediate from \eqref{eq:sharp-update-one}.



\begin{lemma}
Let $\catC$ be a cd-category with normalisation and cancellative caps. Then for each sharp state $o$ on $O$ the updates coincide: $\jup{M}{o} = \pup{M}{o} = M|_O \circ o$. 
\end{lemma}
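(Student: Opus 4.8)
The plan is to specialise the two soft-update constructions of Definition \ref{Def:jeffpearl} to the case $\obs = o$ and show that each collapses to the common value $M|_O \circ o$, relying essentially entirely on the sharp-update identity \eqref{eq:sharp-update-one} which is already established. So this will be a short corollary rather than a fresh computation: the diagrammatic work has all been done in \eqref{eq:sharp-update-one}, and I only need to check that both definitions reduce to the two forms appearing there.

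First I would handle the \jeffrey side. By definition $\jup{M}{o}$ is the composite $M|_O \circ o$ whenever that state is already normalised, and otherwise its normalisation; so the only thing to verify is that the ``more generally'' clause is vacuous, i.e.\ that $M|_O \circ o$ is itself a distribution. This follows from the fact that the minimal conditional $M|_O$ is a partial channel: as noted just after its definition, a conditional sends each sharp state either to a normalised state or to $0$. Since $o$ is sharp, $M|_O \circ o$ is therefore already normalised (or zero), the normalisation acts trivially, and $\jup{M}{o} = M|_O \circ o$ on the nose.

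Next I would treat the \pearl side. By definition $\pup{M}{o}$ is the normalisation of the state obtained by feeding the effect built from $\obs = o$ into the $O$-output of $M$, where that effect is $o$ composed with a cap as in \eqref{eq:eff-from-state}. The key observation is that, for a \emph{sharp} state $o$, this effect-from-state construction yields exactly the sharp effect $o$. Consequently the state sitting inside the Pearl normalisation is precisely the diagram on the right-hand side of \eqref{eq:sharp-update-one}, and that equation already asserts that its normalisation equals $M|_O \circ o$. Hence $\pup{M}{o} = M|_O \circ o$ as well, and combining the two computations gives the stated triple equality.

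I do not expect a genuine obstacle here, since the argument is really just a substitution into \eqref{eq:sharp-update-one}. The only two points needing care are (i) confirming that the \jeffrey normalisation is redundant, which is exactly where the partial-channel property of conditionals is used, and (ii) correctly identifying that \eqref{eq:eff-from-state} reduces to the sharp effect when applied to a sharp state, so that the \pearl definition genuinely matches the effect-based form in \eqref{eq:sharp-update-one}. All remaining diagrammatic content is absorbed into that pre-established identity.
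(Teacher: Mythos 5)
Your argument is correct and follows essentially the same route as the paper, which simply observes that the claim is immediate from \eqref{eq:sharp-update-one}: both definitions specialise to the two diagrams appearing there. Your added care about the redundancy of the normalisation in the \jeffrey{} case (via the partial-channel property of minimal conditionals) and the identification of the effect \eqref{eq:eff-from-state} with the sharp effect is exactly the detail the paper leaves implicit.
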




In contrast, for a general observation $\obs$ the two updates differ in the way they apply normalisation, amounting to whether one normalises with respect to (or separately from) the observation itself. 
\[
\tikzfig{pearl-isnot-jeff}
\]
The \jeffrey update simply composes the observation $\obs$ with the Bayesian inverse (partial) channel $M|_O$. If $M|_O$ is only a partial channel the result may not be normalised (such as when $\obs$ falls outside the support), in which case the update is then further normalised. The \pearl update instead involves a single normalisation, taking place after composing with the observation, so that  $\obs$ is inside the normalisation box. 


\begin{remark}
Jacobs has compared the two forms of updating within $\MatR$ in detail, noting that their inferences can differ considerably, but that both can be considered rational notions of updating \cite{jacobs2019mathematics}. One difference between the updates is that by definition \jeffrey updating forms a probability channel in $O$ (whenever $M|_O$ is a channel, i.e. $M$ has full support over $O$). 
In contrast, the normalisation over $\obs$ in \pearl updating means that it does not form a channel in $O$. 
The two update procedures can also be characterised by the following respective properties. For a generative model $\modelM$ over $S, O$ with likelihood $c$, \jeffrey updating  minimises the KL-divergence between $\obs$ and the marginal on $O$ of the updated model in which we replace the prior with the posterior (left-hand below).  \pearl updating instead has the property that it maximizes the expected value of the function $\obs$ (right-hand below).
\[
\jup{M}{\obs} \text{   minimises:   } \quad 
D_{KL} \left( \tikzfig{KL} \  \  , \   
 \tikzfig{obs} \right) 
\qquad \qquad  \qquad \qquad
\pup{M}{\obs}\text{   maximises:   } 
\quad
\tikzfig{pearl-update-max}
\]
The PP literature has mostly focused on updating with respect to sharp observations, in which the two notions coincide. It is an interesting question for the future to determine which (if either) form of updating is most natural in Bayesian models of cognition. 
\end{remark}

\subsection{Updating Open Models}

Since a typical generative model in PP is composed of various \emph{open} generative models $\modelM$, it is also important to describe how an agent may update such open models $\modelM$, now coming with inputs $\Minput$. In this case we consider the induced channel 
$\pMso \colon \Minput \to S, O$. The prior beliefs about $S$ are now given by the marginal $\pMs \colon \Minput \to S$, which we can think of specifying beliefs over $S$ for each input $\inp \in \Minput$. Given an observation $\obs$ over $O$ the agent now wishes to update this to a posterior channel of the same kind. 
\[
\tikzfig{prior-marginal-channel}
\ \qquad 
\mapsto 
 \ \qquad 
\tikzfig{posterior-channel}
\]

All of the treatment of updating above generalises straightforwardly to such open models, amounting to updating the corresponding closed model $M(\inp)$ over $S, O$ for each input $\inp \in \Minput$. 

Explicitly, for any morphism $f \colon X \to Y \otimes Z$ in a cd-category, \emph{a conditional} is any morphism $f|_Z$ satisfying the left-hand equation below, where $\sigma$ is the corresponding marginal of $f$. In the presence of normalisation and cancellative caps, \emph{the (minimal) conditional} is that given on the right below, as in \cite{lorenz2023causal}.  
\[
\tikzfig{disint-channelf}
\qquad \qquad \qquad 
\tikzfig{disint-channel-picf}
\]

\begin{definition}
Let $M \colon \Minput \to S, O$ be the channel induced by an open model $\modelM$, and $\obs$ a distribution over $O$, in a cd-category with normalisation and cancellative caps.  The \emph{\jeffrey update} denoted $\jup{M}{\obs}$ or $M|_{\obs}$ is given by composing $M|_O$ with $\obs$ as left-hand below (or more generally by its normalisation if the result is not a partial channel).  The \emph{\pearl update} denoted $\pup{M}{\obs}$ or $M|^{\obs}$ is instead given as on the right-hand side.
\[
\tikzfig{jeff-channel2} \qquad \qquad \qquad \tikzfig{pearl-channel-box}
\]
\end{definition}
 By the defining property of normalisations \eqref{eq:norm-sup-cond} the \pearl update $M|_\obs$ satisfies the following, which will be useful later. 
\begin{equation} \label{eq:Pearl-rule}
\tikzfig{pearl-channel-2}
\end{equation}

\begin{example}
In $\MatR$ the minimal conditional of $f$ by $Z$ is given by 
\[
f|_Z(y \mid x ,z) = \Norm{y} f(y, z \mid x)
\]
and for a probability channel $P(Y, Z \mid X)$ corresponds to the usual conditional $P(Y \mid X, Z)$. The formulae for both updates $\jup{M}{\obs} (s \mid i), \pup{M}{\obs}(s\mid i)$ are the same as \eqref{eq:Jeffrey-update-explicit}, \eqref{eq:Pearl-update-explicit} simply replacing each $M(s,o)$ term with $M(s,o \mid \inp)$, i.e. 
\begin{align*}
\jup{M}{\obs}(s \mid i) &= \ExpVal_{o \sim \obs} \Norm{s}  M(s,o \mid i ) 
\\
\pup{M}{\obs}(s \mid i) &= \Norm{s}  \ExpVal_{o \sim \obs} M(s,o \mid i )
\end{align*}
Again both update procedures coincide with $M(s \mid o, \inp)$ for sharp observations $\inp \in \Minput$ and all inputs $\inp \in \Minput$. 


\end{example}

\begin{remark}
Di Lavore and Román also study both forms of updating in cd-categories in which (non-chosen) conditionals exist in \cite{di2023evidential}, calling them `partial Markov categories'. There updating is defined via arbitrary (non-minimal) conditionals, meaning that $M|_O$ can be arbitrarily defined outside the support on $O$ of $M$. However since this arbitrary choice can impact the result of a \jeffrey update $M|_O \circ \obs$ when $\obs$ is also non-zero outside this support, we instead define updating via the minimal conditional $M|_O$.
\end{remark}

\section{Perception and Planning} \label{sec:percep-planning}

Let us now see how the notion of updating is applied by an agent to govern its behaviour in PP. Two fundamental uses of updating are the following. 

\para{Perception}
Firstly, as already alluded to, we can consider the case of an agent with a generative model $\modelM$ from $S$ to $O$, interpreted as 
accounting for observations $O$ in terms of hidden states of the world $S$. For example, $O$ may be the space of pixel-level descriptions of images while $S$ is a compressed representational space of possible objects which the images portray. 

Given an observation encoded by a (soft or sharp) distribution $\obs$ over $O$, the agent can update its prior over hidden states $S$ to obtain a posterior describing how likely each hidden state is to have caused the observation. We refer to this general process of updating as \emph{perception} and view the resulting distribution as the agent's specific perception of the  observation $\obs$. Intuitively perception takes the `raw data' of the observation $\obs$ and returns (a distribution over) representations $S$. 
\[
\tikzfig{perception-posterior}
\]
Intuitively, the update answers the question \emph{`Given that I have received this observation, how likely is each possible world state?'}. In the literature this is often referred to as \emph{inference}, in reference to Bayesian inference.

\para{Planning}
A second application of updating by an agent is in \emph{planning} its behaviours. Here an agent possesses a generative model $\modelM$ of the same formal structure but with objects labelled $P, F$ and interpreted differently. Now $P$ encodes the action policies, or behaviours, the agent may carry out, while $F$ represents observations (or states) it may receive in the \emph{future}. The model $\modelM$ includes a prior over policies which we can think of as the agent's habits or typical behaviours. 

Here the agent possesses  some \emph{preferences} about which future observations (or states) are most desirable, encoded by a distribution $\prefs$ over $F$. Intuitively, the distribution will have highest density on the most desirable outcomes. The agent can then plan its actions by updating its habits with respect to these preferences:
\[
\tikzfig{planning-posterior2}
\]
The process of deriving this distribution can intuitively be called `planning'. We can think of this update as answering the question \emph{`Given that I will obtain my preferences in the future, how likely is each policy to have led to this outcome?'}.


The resulting `plan' distribution over $P$ can be used to guide the agent's future behaviour. For example, an agent may then sample an policy to pursue from this distribution, so that the more probable policies according to the distribution are more likely to be carried out. 



\section{Exact Active Inference} \label{sec:exact-act-inf}

Both uses of updating by an agent, planning and perception, come together in the concept of \emph{active inference}, of which we are now able to present a fully formal diagrammatic account. 

 Consider an agent possessing a generative model describing how its actions, in the form of action policies $P$, bring about changes in its observations. These consist of both observations for the present time (and previous times) $\Opres$ and for future time steps $\Ofuture$. Thus the agent has a closed generative model $\modelM$ of the following form.  
\begin{equation} \label{eq:closed-model}
\tikzfig{model1}
\end{equation}
Here (abusing notation slightly) we denote by $M$ also the channel from policies to observations induced by the model, and $\habits$ is the prior over policies describing the agent's habits.

Suppose further that the agent's model explains the observations at each of these time steps through hidden states, where $\Spres$ denotes the hidden states in the present time and $\Sfuture$ in the future, so that we have:
\begin{equation} \label{eq:ActinfopenM}
\tikzfig{modelOF2}
\end{equation}
for `observation' channels $A, A'$ and `transition' channels $B, B'$. The induced distribution on $P, \Opres, \Ofuture$ is then given by: 
\begin{equation} \label{eq:actinf-total}
\tikzfig{actinf-total}
\end{equation}

The goal of active inference is then the following. The agent receives a current observation given by a distribution $\obs$ over $\Opres$, and also carries a distribution $\prefs$ describing its preferences for future observations $\Ofuture$. 
The agent then wishes to update its prior $\habits$ over policies to yield a posterior which describes its plan of action\footnote{Ultimately, having derived their `plan' distribution the agent may then sample a single action policy $\pi \in P$ as in Section \ref{sec:percep-planning}, and act accordingly. We imagine that via the true `generative process' in the world (distinct from the agent's model) this leads to further observations in the future, to which the agent carries out further planning steps, and so on. Our focus is simply on a single step of how the agent derives their `plan' from $\obs$ and $\prefs$.}:
\begin{equation} \label{eq:plan-act-inf}
\tikzfig{OF} \qquad \mapsto \qquad \tikzfig{actinf-posterior}
\end{equation}
Intuitively, the posterior over policies can be thought of as answering the question \emph{`Given that I have received this observation $\obs$ now, and will attain my preferences $\prefs$ in the future, which action policy am I pursuing?'}. Note that, perhaps surprisingly, the agent's own action policy is thus treated as hidden from itself, and something that it must infer.

Now, typically the objects above all decompose into further structure, as in the following example.

\begin{example}
A common application of active inference is to the discrete-time models with policies given in Section \ref{sec:disc-time-policies}, which we may view as instances of \eqref{eq:ActinfopenM} as follows. Consider such a model featuring $N$ time-steps, where $n << N$ is considered the current time, and all times $m$ with $n \leq m \leq N$ as in the future. The spaces of `current' hidden states and observations $\Spres, \Opres$ are the products over all previous time-steps $t=1,\dots,n$ up to and including the current time, while the future hidden states and observations $\Sfuture, \Ofuture$ take the product over all future time-steps $t=n+1,\dots,N$. 
\begin{align*}
\Spres := S_1 \otimes \dots \otimes S_n \qquad \qquad 
\Sfuture := S_{n+1} \otimes \dots \otimes S_N \\ 
\Opres := O_1 \otimes \dots \otimes O_n  \qquad \qquad 
\Ofuture := O_{n +1} \otimes \dots \otimes O_N
\end{align*}
The observation channels in the overall model \eqref{eq:ActinfopenM} would then be given by:
\[
\tikzfig{obsovertimes}
\]
while the transition channels are as follows:
\[
\tikzfig{DisctimeB}
\]
so that the composite \eqref{eq:actinf-total} yields the network diagram for the overall model for times $t=1,\dots,N$.
\end{example}



An agent may employ various update procedures, such as those discussed in Section \ref{sec:updating}, to calculate its plan of action \eqref{eq:plan-act-inf}.  Though both forms of updating coincide for sharp inputs, and the observations $\obs$ in the active inference literature are typically taken to be sharp, the preferences $\prefs$ are often not; that is, there may be multiple desirable future observations in $\Ofuture$. Thus \pearl and \jeffrey updating can be expected to differ.

Here we will describe an exact active inference procedure based on \pearl updating, allowing both observations $\obs$ and preferences $\prefs$ to be soft. We leave the exploration of \jeffrey updating in active inference for future work. 

Now let us consider how the agent can in the ideal case compute its plan \eqref{eq:plan-act-inf} via an exact update procedure. Firstly, let us rewrite the channel in \eqref{eq:ActinfopenM} as follows. 
\[
\tikzfig{open-model-shape-new}
\]
Here the channels $\Mpres$, $\Mfuture$ are the compositions indicated by the highlighted boxes\footnote{While we could define $\Mfuture$ without $\Sfuture$ as an output, the appearance of  $\Sfuture$ will be useful later in treating approximate active inference. Note also that the dashed boxes in this case do not denote normalisation.}. Now applying the property of \pearl updates \eqref{eq:Pearl-rule} to $\Mpres$ we have the following:
\begin{equation} \label{eq:pearl-exact}
\tikzfig{pearl-exact2}
\end{equation}
Here we have again denoted by $\Mpres, \Mfuture$ their respective marginals on $\Opres, \Ofuture$, given by discarding $\Spres, \Sfuture$ respectively. In the last step we used associativity of copying and the following argument:
\[
\tikzfig{condprop-proof}
\]
where in the middle step we used \eqref{eq:norm-axioms} and \eqref{eq:norm-axioms-more}  to slide the channel $M_2$ and copying out of the normalisation box, respectively. 

Thus we obtain an exact expression for active inference.

\begin{proposition} \label{prop:exact-act-inf}
The plan over policies in Pearl-style exact active inference is given by:
\begin{equation} \label{eq:act-inf-exact-pic}
\tikzfig{act-inf-exact}
\end{equation}
In $\MatR$ the plan has density over policies $\pi \in P$ given by: 
\begin{align} \label{eq:exact-act-inf-formula}
\text{\normalfont plan}(\pi) &:= \Norm{\pi} \left( E(\pi)
(\obs \circ \Mpres(\pi))
(\prefs \circ \ucond{M}{\obs}(\pi)) \right)
\\ 
&= \Norm{\pi} \tikzfig{exact-ai-pic}
\end{align}
\end{proposition}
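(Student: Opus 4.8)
The plan is to treat the two assertions of the proposition separately: the diagrammatic identity \eqref{eq:act-inf-exact-pic}, and its interpretation in $\MatR$ as \eqref{eq:exact-act-inf-formula}. For the former I would not reprove anything, but simply observe that \eqref{eq:act-inf-exact-pic} is exactly the final diagram of the chain \eqref{eq:pearl-exact}: by definition the plan is the \pearl update, with respect to the combined evidence $\obs \otimes \prefs$ on $\Opres \otimes \Ofuture$, of the joint distribution obtained by feeding the habit prior $E$ into the total channel $M$, and \eqref{eq:pearl-exact} rewrites this by decomposing $M$ into $\Mpres$ and $\Mfuture$, applying the \pearl-update property \eqref{eq:Pearl-rule} to peel the present observation $\obs$ into an inner perception update, and using the normalisation axioms \eqref{eq:norm-axioms} and \eqref{eq:norm-axioms-more} to slide $\Mfuture$ and the copy map out of that inner normalisation box. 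So the first equality requires only citing the derivation already carried out.

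Next I would obtain the first line of \eqref{eq:exact-act-inf-formula} by evaluating the diagram \eqref{eq:act-inf-exact-pic} on each sharp policy $\pi \in P$, using the interpretations of effects, caps and normalisation from Section~\ref{sec:setup}. The three parallel components read off as: the prior weight $E(\pi)$; the present-observation likelihood $\obs \circ \Mpres(\pi) = \sum_{o}\obs(o)\,\Mpres(o \mid \pi)$; and the preference term $\prefs \circ \ucond{M}{\obs}(\pi)$, where $\ucond{M}{\obs}(\pi)$ is the distribution over future observations $\Ofuture$ produced by first \pearl-updating the present hidden states on $\obs$ and then propagating through $\Mfuture$. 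The outer $\Norm{\pi}$ then renormalises over policies, giving the product form exactly.

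The substantive step is the second equality, identifying this product with the single expanded diagram. Expanding the future term as
\[
\ucond{M}{\obs}(f \mid \pi) = \sum_{s} \Big( \Norm{s} \sum_{o}\Mpres(o,s \mid \pi)\,\obs(o)\Big)\,\Mfuture(f \mid s,\pi),
\]
I would note that the normalising denominator of the inner perception update is precisely $\sum_{s,o}\Mpres(o,s\mid\pi)\,\obs(o) = \obs \circ \Mpres(\pi)$, the second factor of the product. Multiplying the second and third factors therefore cancels this denominator, collapsing the nested normalisation into the single unnormalised joint $\sum_{o,s,f}\obs(o)\,\Mpres(o,s\mid\pi)\,\prefs(f)\,\Mfuture(f\mid s,\pi)$, which is exactly the value of the expanded diagram before the outer $\Norm{\pi}$. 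Diagrammatically this cancellation is itself an instance of the \pearl property \eqref{eq:Pearl-rule}, so the same argument can be read straight off the string diagrams.

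The hard part is purely bookkeeping: one must keep the inner normalisation (perception over present states $\Spres$, carried out separately for each policy) strictly distinct from the outer normalisation (over policies $P$), and verify that the per-policy perception denominator is literally the factor $\obs \circ \Mpres(\pi)$, so that the cancellation is exact rather than merely proportional. The only delicate case is a policy $\pi$ with $\obs \circ \Mpres(\pi) = 0$ — the present observation has no support under $\pi$ — where the minimal-conditional convention sends the perception update to $0$; here I would check that both forms assign zero plan weight to $\pi$, consistent with normalisations being only partial channels in general.
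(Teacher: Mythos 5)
Your proposal is correct and, for the substance of the proposition, follows the paper's own route: the diagrammatic identity \eqref{eq:act-inf-exact-pic} is just the endpoint of the derivation \eqref{eq:pearl-exact} and needs no new argument, and the first line of \eqref{eq:exact-act-inf-formula} is obtained exactly as you describe, by plugging a sharp $\pi$ into the diagram so that copying splits it into the three scalars $E(\pi)$, $\obs \circ \Mpres(\pi)$ and $\prefs \circ \ucond{M}{\obs}(\pi)$, with the outer $\Norm{\pi}$ supplied by the outermost normalisation box. Where you diverge is the final displayed equality. The paper's step there is much lighter than yours: the second-line diagram is simply the same three-factor product with the marginal $\Mpres \colon P \to \Opres$ rewritten as the composite of the transition channel $B$ and the observation channel $A$; the inner normalisation box in the preference term is \emph{retained}, which is precisely why the paper immediately afterwards cites ``the normalisation in calculating $\ucond{M}{\obs}$'' as a source of intractability. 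Your cancellation of the perception denominator $\sum_{s,o}\Mpres(o,s\mid\pi)\,\obs(o)$ against the factor $\obs\circ\Mpres(\pi)$ is a correct computation --- including the zero-support edge case, which you handle properly under the minimal-conditional convention --- and it yields the genuinely useful further identity that the plan is proportional to the fully unnormalised joint evidence $E(\pi)\sum_{o,s,f}\obs(o)\,\Mpres(o,s\mid\pi)\,\prefs(f)\,\Mfuture(f\mid s,\pi)$. But this proves a stronger reformulation than the one the paper displays; as a proof of the stated second equality it overshoots, the intended step being only the substitution of $A \circ B$ for the marginal $\Mpres$.
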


\begin{proof}
The first equality holds by definition, so $\text{\normalfont plan}(\pi) = \Norm{\pi} f(\pi)$ where $f$ is the density of the state in \eqref{eq:pearl-exact}. But this is given by: 
\[
\tikzfig{pearl-exact-plugin}
\]
using that $\pi$ is sharp, where the three right-hand scalars are precisely the terms in \eqref{eq:exact-act-inf-formula}. The last line comes from noting that the given marginal $M_1 \colon P \to A$ is precisely $B \circ A$.
\end{proof}

There is only one problem with this form of active inference: the quantity \eqref{eq:exact-act-inf-formula} is completely intractable to calculate. Along with the normalisation in calculating $\ucond{M}{\obs}$, calculating the terms in \eqref{eq:exact-act-inf-formula} would involve summation (or integration) over $\Spres,\Opres$ and $\Sfuture,\Ofuture$ respectively, requiring us to respectively calculate:
\[
\sum_{s \in \Spres, o \in \Opres} \obs(o) A(o \mid s) B(s \mid \pi) 
\qquad \qquad 
\sum_{o' \in \Ofuture} \prefs(o') \ucond{M}{\obs}(o' \mid \pi) 
\]

To make the calculation of these updates tractable, an agent in active inference is understood to instead use a special form of approximation scheme, to which we now turn.  



\section{Free Energy} \label{sec:free-energy}

We have seen that for an agent to perform exact Bayesian updating is computationally intractable. In active inference, an agent instead carries out approximate updating by minimising a quantity known as \emph{free energy} \cite{friston2006free,friston2010free,parr2022active}. In this section for simplicity we work concretely in the category $\catC = \MatR$, though the same notions should be similarly defined in continuous settings.

The extra mathematical ingredient\footnote{
To study free energy we will move beyond a purely diagrammatic approach and make use of some probabilistic calculations, most notably to define `surprise'. However later in Section 9 we will see how to represent surprise in diagrams (via `log-boxes'). In future work it would be interesting to represent all of the calculations in this section using such diagrams. 
} needed to define free energy will be the following .

\begin{definition} 
For any distribution $\sigma$ over $X$ and $x \in X$ we define the \emph{surprise} as $\Sp(\sigma)(x) := -\log \sigma(x)$. For another distribution $\omega$ on $X$ we define the overall \emph{surprise} of $\sigma$ relative to $\omega$ as the expectation value:
\[
\Sp\left(\tikzfig{omegastatealt} \ , \ \tikzfig{sigmastatealt} \right) \quad := 
\quad \mathop - \mathop {\mathbb{E}}_{x \sim \omega}  \log \sigma(x)
\]
The \emph{entropy} $\He(\omega)$ of $\omega$ is its self-surprise: 
\[
\He \left( \tikzfig{omegastatealt} \right) := \Sp\left(\tikzfig{omegastatealt} \ , \ \tikzfig{omegastatealt}  \right)
\]
while the \emph{Kullback-Liebler (KL) divergence} $\DKL(\omega,\sigma)$ from $\sigma$ to $\omega$ is the difference between these quantities:
\[
\DKL \left(\tikzfig{omegastatealt} \ , \ \tikzfig{sigmastatealt} \right) := \Sp\left(\tikzfig{omegastatealt} \ , \ \tikzfig{sigmastatealt}  \right) - \He \left( \tikzfig{omegastatealt} \right) 
\]
\end{definition} 

The KL divergence is a commonly used similarity measure on distributions, with $D(\omega, \sigma) \geq 0$ and $D(\omega, \omega) = 0$ for all distributions $\omega, \sigma$.

We may now define the following general notion of free energy. Throughout we consider a distribution $M$ over $S, O$, which we imagine to be induced by a generative model from $S$ to $O$. In this section for simplicity given any such distribution we denote its marginals on $S, O$ and conditional channels $M|_S, M|_O$ again simply by $M$.

\begin{definition}[Free Energy]
The \emph{Free Energy} of a distribution $\qFE$ over $S, O$ relative to $M$ is defined as:
\begin{align} \label{eq:FE-functional}
\FE 
\left( \tikzfig{Q-gens} \ , \tikzfig{M-gens}\right)
 \ \ \ &:= \ \ \ 
\Sp \left(\tikzfig{Q-gens} \ , \tikzfig{M-gens}\right)
 \ - \  
\He \left ( \ \tikzfig{QS-2} 
\right) 
\end{align}
\end{definition} 
Explicitly then we can re-write the free energy in the following useful form.
\begin{align}
\FE(Q,M) &= 
\mathop{\mathbb{E}}_{(s,o) \sim Q}[\log(\qFE(s))-\log(M(s,o))] \\ 
&=  \label{eq:FE-otos}
\mathop{\mathbb{E}}_{(s,o) \sim Q}[\log(\qFE(s)-\log(M(s \mid o)) - \log M(o)] \\ 
&= 
\mathop{\mathbb{E}}_{o\sim Q} \Sp \left( \tikzfig{Q-o} \ , \tikzfig{M-o} \right) 
 + \Sp\left(\tikzfig{QO3},\tikzfig{MO3} \right) 
-
\He \left(\tikzfig{QS-2} \right)
\label{eq:FE-graphical}
\end{align}

We now turn to two specific variants of this quantity commonly considered in active inference.  

\subsection{Variational Free Energy}

Suppose an agent receives an observation given by a distribution $\obs$ over $O$, and wishes to perform an approximate Bayesian update of its prior beliefs about $S$ as encoded by the marginal of $M$ on $S$. It may do so by finding the distribution $q$ over $S$ which minimises the following quantity.

\begin{definition}[Variational Free Energy]
Given a distribution $M$ over $S, O$ and distribution $\obs$ over $O$, the \emph{Variational Free Energy (VFE)} of a distribution $q$ over $S$ is defined as: 
\begin{align*}
\VFE\left( \tikzfig{q} \right)
&:= 
\FE \left(\tikzfig{q} \ \tikzfig{o}  \ ,  \ \tikzfig{M-gens} \right)
\end{align*}
\end{definition} 

An important feature of the VFE is the following. Using the expression \eqref{eq:FE-graphical} and pulling the entropy term inside the expectation we see that  
\begin{align}
\VFE(q)
 \ \ &= \ \  
\mathop{\mathbb{E}}_{o\sim \obs} 
\DKL
\left( \tikzfig{q} \ ,  \  \tikzfig{M-od} \right) 
   \ + \  \Sp\left(\tikzfig{o-state-small},\tikzfig{MO3} \right) 
  \label{eq:VFE-exact}
\\ 
&\geq \ \  
\DKL
\left( \tikzfig{q} \ , \   \tikzfig{M-o2} \right) 
 \ + \  \Sp\left(\tikzfig{o-state-small},\tikzfig{MO3} \right) 
\label{eq:VFE-approx}
\end{align} 
The inequality follows from concavity of the KL divergence and Jensen's inequality, which states that for any probability measure $\omega$ on $X$, measurable function $f \colon X \to \mathbb{R}$ and concave function $\phi$ on $\mathbb{R}$ we have 
\begin{equation} \label{eq:jensen-general}
\ExpVal_{x \sim \omega}[\phi(f(x))] \leq \phi(\ExpVal_{x \sim \omega} [f(x)])
\end{equation}

In particular we see that the inequality \eqref{eq:VFE-approx} will be a strict equality whenever $\obs=\delta_o$ is given by a sharp observation $o \in O$. In this case the minimum VFE value is given by the exact Bayesian inverse $M|_o$, with value $\VFE = -\log M(o)$. Hence for a sharp observation $o$, minimising the VFE minimises the KL-divergence between $q$ and the Bayesian inverse $M|_o$, achieving approximate inversion $q \approx M|_o$. Moreover $\VFE(q)$ is an upper bound on the surprise of the observation $o$, and when $q \approx M|_o$ we have $\VFE(q) \approx S(o,M)$.

\para{VFE Updating}
This process of minimising VFE to compute an approximate Bayesian update is central in active inference, but typically only considered for such sharp observations. Here we can now consider the more general minimisation of VFE for a soft observation given by a distribution $\obs$. In fact we may view this as another notion of updating for a prior over $S$, in addition to the two forms of updating met in Section \ref{sec:updating}. 

Firstly, observe that in the expression \eqref{eq:VFE-exact} since the surprise term is constant, the distribution $q$ which minimises $\VFE(q)$ will be that which minimises the left-hand expected KL term, which is equal to the following.
\begin{align*}
\mathop{\mathbb{E}}_{s\sim q} \left[
\log q(s) - 
\mathop{\mathbb{E}}_{o\sim \obs} 
\log M(s \mid o)
\right]
\end{align*}
This quantity will in turn be minimised when this expression over $S$ is equal to a constant $K$, so that:
\begin{align*}
\log q(s) = \mathop{\mathbb{E}}_{o\sim \obs} \left[
\log M(s \mid o)\right] + K
\end{align*} 
The distribution $q$ will be given by normalising $q(s)$ in the above expression, allowing us to ignore the constant and yielding the following notion of updating motivated by the VFE. Recall that the \emph{softmax} of a function $f \colon X \to \mathbb{R}^+$ is defined by $\softmax(f)(x) = \Norm{x} {e^{f(x)}}$.

\begin{definition}[VFE Update]
Given a joint distribution $M$ over $S, O$ and distribution $\obs$ over $O$ the \emph{VFE update} is the posterior 
\begin{align} \label{eq:VFE-update}
\vup{M}{\obs}(s)  &= 
\Norm{s}  e^{\ExpVal_{o \sim \obs} \log M(s \mid o)}
\\ 
&= 
\softmax( 
\mathop{\mathbb{E}}_{o\sim \obs} 
\log M(s \mid o) )
\end{align}
where $\softmax$ denotes a softmax over $S$. 

Similarly, for any channel $M$ from $P$ to $S, O$ we define the \emph{VFE update} of its marginal $P \to S$ point-wise, by $\vup{M}{\obs}(s \mid \pi) = \vup{M(\pi)}{\obs}(s)$ for each $\pi \in P$. 
\end{definition}

From the derivation above we see that $q= \vup{M}{\obs}$ is the distribution which minimises $\VFE(q)$. Note that, as for our other forms of updating, for a sharp observation $\obs = \delta_o$ we have $\vup{M}{\obs}(s) = M(s \mid o)$.

To relate general VFE minimisation for a soft observation to expectation values, we will use the following form of approximation. Firstly, note that by Jensen's inequality, for any probability measure $\omega$ and real function $f$ over $X$ we have: 
\begin{equation} \label{eq:logapprox}
e^{\ExpVal_{x \sim \omega}[\log f(x)]} \leq \ExpVal_{x \sim \omega} [f(x)]
\end{equation} 
Whenever we take both sides of such an inequality to be approximately equal, let us say we are using a \emph{log approximation}. In particular for any distributions $\omega, \sigma$ on $X$ the follow holds log-approximately:
\begin{equation} \label{eq:log-approx}
e^{-\Sp}\left(\tikzfig{omegastatealt} \ , \ \tikzfig{sigmastatealt} \ \right)  \quad \lessapprox \quad 
\tikzfig{validity}
\end{equation}
Indeed this states precisely \eqref{eq:logapprox} for the case 
$f(x) = \sigma(x)$. Such approximations can be used to relate free energy to exact expectation values, as follows.

\begin{proposition} \label{prop:VFE-approx}
Let $\vup{M}{\obs}$ be the VFE update of $M$ relative to a distribution $\obs$ over $O$, and $\VFE$ its VFE value. Then the following holds log-approximately: 
\[
\tikzfig{VFEapprox}
\]
\end{proposition}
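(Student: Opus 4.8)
The plan is to unfold the diagrammatic identity into explicit densities in $\MatR$ and verify it by a short computation. Reading off the diagram, the right-hand side is simply the joint distribution $M$ over $S,O$ with the effect $\obs$ plugged into its $O$-output, i.e.\ the (sub-normalised) state $s \mapsto \ExpVal_{o \sim \obs} M(s,o) = \sum_o M(s,o)\obs(o)$ over $S$, while the left-hand side is the scalar $e^{-\VFE}$ multiplying the state $\vup{M}{\obs}$. So the whole claim reduces to showing that $e^{-\VFE}\,\vup{M}{\obs}(s) \lessapprox \ExpVal_{o\sim\obs} M(s,o)$ holds log-approximately for each $s \in S$.

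First I would compute the minimum free energy value $\VFE$ explicitly. Writing $Z := \sum_{s'} e^{\ExpVal_{o\sim\obs}\log M(s'\mid o)}$ for the softmax normaliser in \eqref{eq:VFE-update}, the optimal $q = \vup{M}{\obs}$ satisfies $\log q(s) = \ExpVal_{o\sim\obs}\log M(s\mid o) - \log Z$. Substituting this into the free-energy expression \eqref{eq:FE-otos} with $Q = q \otimes \obs$, the two copies of $\ExpVal_{s\sim q}\ExpVal_{o\sim\obs}\log M(s\mid o)$ cancel, leaving the closed form $\VFE = -\log Z - \ExpVal_{o\sim\obs}\log M(o)$. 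As a sanity check, for a sharp $\obs = \delta_o$ one has $Z=1$ and this recovers $\VFE = -\log M(o)$, matching the stated sharp-case value.

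Next I would combine this with the update itself. Since $e^{-\VFE} = Z\,e^{\ExpVal_{o\sim\obs}\log M(o)}$ and $\vup{M}{\obs}(s) = Z^{-1} e^{\ExpVal_{o\sim\obs}\log M(s\mid o)}$, the normaliser $Z$ cancels and the chain rule $M(s,o) = M(s\mid o)M(o)$ gives
\[
e^{-\VFE}\,\vup{M}{\obs}(s) \;=\; e^{\ExpVal_{o\sim\obs}[\log M(s\mid o) + \log M(o)]} \;=\; e^{\ExpVal_{o\sim\obs}\log M(s,o)}.
\]
Applying the log-approximation with $f(o) = M(s,o)$ and measure $\obs$ then yields $e^{\ExpVal_{o\sim\obs}\log M(s,o)} \lessapprox \ExpVal_{o\sim\obs} M(s,o)$, which is exactly the right-hand diagram. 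This is precisely the instance \eqref{eq:logapprox}--\eqref{eq:log-approx} of Jensen's inequality, so the direction ($\lessapprox$) is automatic and becomes an exact equality when $\obs$ is sharp.

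The main obstacle is really just the bookkeeping in the second step: one must substitute the optimal $q$ and track the cancellation so that only the normalising term $-\log Z$ and the evidence surprise $-\ExpVal_{o\sim\obs}\log M(o)$ survive in $\VFE$. I would also flag the standing full-support caveat, since the factorisation $M(s\mid o)M(o) = M(s,o)$ and reading $\vup{M}{\obs}$ as a genuine distribution require $M|_O$ to be a channel; outside the support the expressions are governed by the $0\log 0 = 0$ convention and the normalisation conventions of Section~\ref{sec:setup}. Everything else is a direct appeal to the already-established identities \eqref{eq:FE-otos}, \eqref{eq:VFE-update}, and \eqref{eq:logapprox}.
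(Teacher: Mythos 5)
Your proof is correct and follows essentially the same route as the paper's: your normaliser $Z$ is the paper's constant $K$, your closed form $\VFE = -\log Z - \ExpVal_{o\sim\obs}\log M(o)$ is exactly the paper's $\VFE = \Sp(\obs,M) - \log K$, and both arguments conclude by cancelling the normaliser, applying the chain rule $M(s\mid o)M(o)=M(s,o)$, and invoking the log-approximation \eqref{eq:logapprox}. The only difference is that you derive the value of $\VFE$ at the optimum explicitly from \eqref{eq:FE-otos}, whereas the paper states it directly; your sharp-case sanity check and support caveat are welcome but not part of the paper's argument.
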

\begin{proof}
Define $f(s) := e^{\mathop{\mathbb{E}}_{o \sim \obs}  \log M(s \mid o)}$ and the normalisation constant $K = \sum_s f(s)$, so that $K \vup{M}{\obs}(s) = f(s)$. Then we have:
\begin{align}
F &=
\Sp(\obs, M) + 
\mathop{\mathbb{E}}_{s \sim q} [\log \vup{M}{\obs}(s) - \mathop{\mathbb{E}}_{o \sim \obs}  \log M(s \mid o)]  \label{eq:VFE-min-formula}
 \\ &=  \Sp(\obs, M) - \log K \nonumber
\\ \nonumber
e^{-F} \vup{M}{\obs}(s) &= 
e^{-\Sp(\obs, M)} K \vup{M}{\obs}(s) \\ \nonumber
& = e^{-\mathop{\mathbb{E}}_{o \sim \obs} [\log M(o) + \log M(s \mid o)]} \\ \nonumber
& = e^{-\mathop{\mathbb{E}}_{o \sim \obs} [\log M(s, o)]} 
 \approx \mathop{\mathbb{E}}_{o \sim \obs} M(s, o)
\end{align}
where in the last step we used a log-approximation.
\end{proof}

\begin{remark}
 Compare the formula for VFE update to the \jeffrey and \pearl updates \eqref{eq:Jeffrey-update-explicit}, \eqref{eq:Pearl-update-explicit}. While the \jeffrey update composes the conditional $O \to S$ with $\obs$ exactly, the VFE update instead minimises the expected KL below. 
\[
\tikzfig{jeffrey-def2}
\qquad 
\text{ while }
\qquad 
\vup{M}{\obs} \ \ \text{ minimises } \ \ 
\ExpVal_{o\sim \obs}  \ 
\DKL 
\left( \tikzfig{vfeup} \ , \  \tikzfig{M-od} \right) 
\]
\end{remark}

\subsection{Expected Free Energy}

A second form of free energy employed in active inference is used by an agent with a model featuring a space $\EFO$ describing observations in the future. It then has a distribution $\prefs$ over $\EFO$ modelling preferences for these future observations. Rather than updating its beliefs about future states, the agent simply want to assess how well the marginal of the model on $\EFO$ will fit these preferences, via the following approximation. 

\begin{definition} 
Given a distribution $M$ over $\EFS, \EFO$ and distribution $\prefs$ over $\EFO$, the  \emph{Expected Free Energy (EFE)} is defined as 
\begin{equation}\label{eq:EFE-def}
\EFE \left( \tikzfig{M-genso} \ , \tikzfig{Cstate2f} \right)
 \ \ := \ \  
\FE \left( \tikzfig{M-genso} \ \ , \ \ \tikzfig{model-CF}  \right) 
\end{equation}
\end{definition}

The EFE compares the given model $M$ to the right hand generative model which perfectly attains the preferences, via its marginal $\prefs$ over $\EFO$, whilst making use of the same inverse channel $\EFO \to \EFS$. Writing the EFE explicitly, and then rewriting in terms of the typically more readily computable channel $\EFS \to \EFO$, we have 
\begin{align*}
\EFE (M,\prefs)
 &= 
\mathop{\mathbb{E}}_{(s,o) \sim M}[\log(M(s))-\log(M(s \mid o))] - \mathop{\mathbb{E}}_{o \sim M}[\log \prefs(o)] \\ 
&=
\mathop{\mathbb{E}}_{\substack{s \sim M \\ o \sim M \circ s} }
[-\log(M(o \mid s)] 
 + \mathop{\mathbb{E}}_{o \sim M}[ \log M(o) - \log \prefs(o)]
\\
&= \label{eq:entropy-expression}
\mathop{\mathbb{E}}_{s \sim M}
\left[
H
\left(
\tikzfig{Ms}
\right)
\right]
+
D
\left(
\tikzfig{MO} \ ,
\tikzfig{C5}
\right)
\end{align*}

The final line expresses the EFE in terms of a right-hand \emph{risk} term, which assesses how well the predicted state over $\EFO$ matches the preferences $\prefs$, and a left-hand \emph{uncertainty} term given by the expected entropy in the observations. Thus minimising EFE requires both matching preferences and reducing uncertainty. For more interpretations of EFE see \cite{parr2022active}.  

Now using Jensen's inequality and the concavity of entropy, one may show that for any distribution $\omega$ and channel $c $ we always have: 
\[
\mathop{\mathbb{E}}_{x \sim \omega} 
H
\left( \tikzfig{cx} \right)
\leq 
H \left( \ 
\tikzfig{comega} \ 
\right)
\]
Hence the EFE is bounded above by the surprise of the preferences:
\begin{align} \label{eq:EFE-surprise-bound}
\EFE \left( \tikzfig{M-genso} \ , \tikzfig{Cstate2f} \right) \ \ 
\leq
\ 
H
\left(
\tikzfig{MO}
\right)
\ 
+
\ 
D
\left(
\tikzfig{MO} \ ,
\tikzfig{C5}
\right)
 \  
=
\ 
\Sp
\left(
\tikzfig{MO} \ ,
\tikzfig{C5}
\right) 
\end{align}
Thus minimising the EFE results in reducing the surprise of the preferences, making them more likely to be obtained according to the model. Taking the inequality to be an approximation and applying exponentials to both side along with a log-approximation then gives the following. 

\begin{proposition} \label{prop:EFE-approx}
The EFE is bounded above and approximately equal to the expectation value:
\[
e^{- \EFE} \left( \tikzfig{M-genso} \ , \tikzfig{Cstate2f} \right)   \quad \lessapprox 
\quad \tikzfig{validity3alt}
\] 
\end{proposition}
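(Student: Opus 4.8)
The plan is to chain the two facts established just before the statement: the bound \eqref{eq:EFE-surprise-bound} relating the EFE to the surprise of the preferences, and the log-approximation \eqref{eq:log-approx} relating that surprise to an expectation value. No genuinely new computation is required; the work lies in feeding each ingredient the correct distributions and keeping careful track of what is exact and what is approximate.

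First I would recall \eqref{eq:EFE-surprise-bound}, which gives the genuine bound $\EFE(M,\prefs) \le \Sp(M,\prefs)$, where $M$ now denotes the marginal of the model on $\EFO$ and $\Sp(M,\prefs) = -\ExpVal_{o \sim M}\log\prefs(o)$. Following the recipe stated before the proposition, I then read this inequality as an approximate equality and exponentiate the negated quantities, obtaining $e^{-\EFE(M,\prefs)} \approx e^{-\Sp(M,\prefs)}$.

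Next I would instantiate the log-approximation \eqref{eq:log-approx} with $\omega$ the marginal $M$ on $\EFO$ and $\sigma = \prefs$; this is exactly \eqref{eq:logapprox} for $f = \prefs$, and yields
\[
e^{-\Sp(M,\prefs)} = e^{\ExpVal_{o\sim M}\log\prefs(o)} \lessapprox \ExpVal_{o\sim M}\prefs(o),
\]
whose right-hand side is precisely the scalar obtained by composing the marginal of $M$ on $\EFO$ with the preference effect $\prefs$, i.e. the diagram $\tikzfig{validity3alt}$. Combining this with the previous step gives the claimed $e^{-\EFE} \lessapprox \tikzfig{validity3alt}$.

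The only point to state carefully — rather than a real obstacle — is the bookkeeping of the two relations being combined and the meaning of the final symbol. The clause ``bounded above'' refers to the exact inequality \eqref{eq:EFE-surprise-bound}, $\EFE \le \Sp$, whereas the $\lessapprox$ in the conclusion records the Jensen bound of \eqref{eq:log-approx} applied to $e^{-\Sp}$, together with the approximation $\EFE \approx \Sp$. Because \eqref{eq:EFE-surprise-bound} is being used as an approximate equality and not composed directionally as a bound, the only genuine inequality surviving in the final $\lessapprox$ is the concavity-of-$\log$ step $e^{-\Sp(M,\prefs)} \le \ExpVal_{o\sim M}\prefs(o)$. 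I would make explicit that the overall approximation is controlled by the tightness of two distinct Jensen inequalities — concavity of entropy in \eqref{eq:EFE-surprise-bound} and concavity of $\log$ in \eqref{eq:log-approx} — each of which becomes an equality in the appropriate degenerate limit (e.g. when the observation channel is deterministic, or when $\prefs$ is constant on the support of the predicted marginal).
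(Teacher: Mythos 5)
Your proof is correct and follows essentially the same route as the paper, which derives the proposition in one line by reading the surprise bound \eqref{eq:EFE-surprise-bound} as an approximate equality, exponentiating, and applying the log-approximation \eqref{eq:log-approx} to $e^{-\Sp}$. Your additional remark that the exact bound $\EFE \le \Sp$ does not chain directionally with the Jensen step, so that the surviving inequality in $\lessapprox$ is only the concavity-of-$\log$ one, is a careful and accurate reading of what the paper leaves implicit.
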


\subsection{Free Energy in Active Inference}

We conclude this section by noting two uses of free energy in approximate active inference, treated in the next section. For these we now consider a channel $M$ from $P$ to $S, O$, typically induced by an open model. For each $\pi \in P$ this specifies a joint distribution $M(\pi)$ over $S, O$, to which we may apply free energy calculations. 

\begin{corollary} \label{cor:open-VFE-EFE}
Let $\obs$ and $\prefs$ be distributions over $O$. Let $\vup{M}{\obs} \colon P \to S$ be the VFE update of $M$ by $\obs$, and for each $\pi \in P$ set $\VFE(\pi) := \VFE(\vup{M(\pi)}{\obs})$ to the corresponding VFE value. Similarly for each $\pi \in P$ let $\EFE(\pi) = \EFE(M(\pi),\prefs)$. Then we have the following approximations: 
\[
\tikzfig{approx-simpleb} \qquad \qquad \qquad \tikzfig{efe-approx2}
\]
\end{corollary}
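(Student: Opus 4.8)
The plan is to reduce both approximations to the single-distribution results already established, Proposition~\ref{prop:VFE-approx} and Proposition~\ref{prop:EFE-approx}, by evaluating the two displayed diagrams at each sharp policy $\pi \in P$. Both claims equate morphisms out of $P$: the left-hand approximation compares two channels $P \to S$, namely the VFE update $\vup{M}{\obs}$ rescaled pointwise by the scalar $e^{-\VFE(\pi)}$, against the channel obtained from $M \colon P \to S, O$ by plugging the effect $\obs$ into its $O$-output; while the right-hand approximation compares two effects on $P$, namely the function $\pi \mapsto e^{-\EFE(\pi)}$, against the effect obtained from $M$ by discarding $S$ and plugging $\prefs$ into the $O$-output. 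Working in $\MatR$, a morphism out of the finite set $P$ is determined by its values on the sharp states $\pi \in P$, so it suffices to verify each approximation after precomposing with an arbitrary such $\pi$.

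First I would precompose with a fixed $\pi$. Composing $M \colon P \to S, O$ with the sharp state $\pi$ collapses it to the joint distribution $M(\pi)$ over $S, O$, and by the pointwise definitions of Section~\ref{sec:free-energy} we have $\vup{M}{\obs}(-\mid\pi) = \vup{M(\pi)}{\obs}$, together with $\VFE(\pi) = \VFE(\vup{M(\pi)}{\obs})$ and $\EFE(\pi) = \EFE(M(\pi),\prefs)$. Under this substitution the left-hand diagram evaluated at $\pi$ becomes precisely the assertion $e^{-\VFE(\pi)}\,\vup{M(\pi)}{\obs}(s) \approx \ExpVal_{o \sim \obs} M(s, o \mid \pi)$ of Proposition~\ref{prop:VFE-approx} for the distribution $M(\pi)$, and the right-hand diagram evaluated at $\pi$ becomes the bound-and-approximation $e^{-\EFE(\pi)} \lessapprox \ExpVal_{o \sim M(\pi)} \prefs(o)$ of Proposition~\ref{prop:EFE-approx} for $M(\pi)$. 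Both then follow by directly invoking those propositions.

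Finally I would reassemble the pointwise statements: since each log-approximation holds for every $\pi \in P$, and both sides of each claim are morphisms out of $P$ pinned down by their values at the $\pi$'s, the displayed approximations of channels and effects follow. I expect no serious obstacle here, as the mathematical content is entirely carried by Propositions~\ref{prop:VFE-approx} and~\ref{prop:EFE-approx}; the corollary merely repackages them in the open, $P$-indexed setting. The only point requiring care is bookkeeping about the meaning of $\approx$: these are log-approximations in the sense of~\eqref{eq:log-approx}, so the reduction should be read as ``the approximation holds at each input $\pi$'', the single genuine ingredient being that a morphism out of a finite object of $\MatR$ is fixed by its action on sharp states.
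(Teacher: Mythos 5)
Your proposal is correct and follows essentially the same route as the paper: the paper's proof likewise plugs a sharp state $\pi \in P$ into both sides of each diagram and observes that the resulting statements are exactly Proposition~\ref{prop:VFE-approx} for the joint distribution $M(\pi)$ with observation $\obs$, and Proposition~\ref{prop:EFE-approx} for $M(\pi)$ with preferences $\prefs$. Your additional remark that a morphism out of a finite object of $\MatR$ is determined by its values on sharp states is exactly the (implicit) justification the paper relies on.
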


In the above the effect $e^{-\VFE}$ is given by $\pi \mapsto e^{-\VFE(\pi)}$, for $\pi \in P$, and $e^{-\EFE}$ is defined similarly.

\begin{proof}
For the first approximation, plugging in a (sharp state given by) an element $\pi \in P$ to both sides shows that this is equivalent to Proposition \ref{prop:VFE-approx} holding for each joint distribution $M(\pi)$ over $S, O$ with respect to the observation $\obs$. For the second approximation, apply Proposition \ref{prop:EFE-approx} to the joint distribution $M(\pi)$ over $\EFS, \EFO$ for each $\pi \in P$.
\end{proof}

\section{Active Inference via Free Energy} \label{sec:approx-actinf}

Let us now return to the situation of an agent carrying out active inference as in Section \ref{sec:exact-act-inf}. As before the agent's generative model $\modelM$ in \eqref{eq:closed-model} consists of its habits $\habits$ over policies $P$ and a channel $M$ from $P$ to current and future observations $\Opres, \Ofuture$, factoring via current and future hidden states $\Spres, \Sfuture$. Given its observation $\obs$ and future preferences $\prefs$ it can now use free energy to give a viable approximation of its updated plan of behaviour from Proposition \ref{prop:exact-act-inf}, proceeding in two steps.  We saw already in \eqref{eq:pearl-exact} that: 
\[
\tikzfig{pearl-exact3}
\]
\para{`Perception' step} 
In the first step, the agent approximately updates the part of the model pertaining to the current time, $\Mpres$, in light of the observation $\obs$. For each policy $\pi$ it computes a distribution $q(\pi)$ with (approximately) minimal VFE $\VFE(q(\pi))$, thus obtaining a channel $q \colon P \to S$ which approximates the VFE update of $\Mpres$ by $\obs$. For each $\pi \in P$ denote the corresponding VFE value by $\VFE(\pi)$. Explicitly: 
\begin{align*}
\VFE(\pi)
=
\VFE \left( \tikzfig{qpii} \right)
&:= 
\FE \left(\tikzfig{qpii} \ \tikzfig{o}  \ ,  \ \tikzfig{Mprespi} \right)
\end{align*}

\para{`Prediction' step}
In the second step, the agent uses this approximation channel $q$, to obtain a channel $\Mq$ which approximates the model over future states and observations, defined as follows: 
\[
\tikzfig{Mq}
\]
For each policy $\pi$ this induces a distribution $\Mq(\pi)$ over $\Sfuture, \Ofuture$, for which the agent can compute the EFE with respect to the preferences:
\begin{equation} \label{eq:EFE-pi}
\EFE(\pi) := \EFE \left( \tikzfig{Mqpi}  \  ,  \  \tikzfig{prefF}  \right)
\end{equation}

Using these free energy quantities, the agent may carry out approximate active inference. The following formula is central in the active inference literature.


\begin{theorem} \label{thm:approx-ai}
The agent can carry out approximate active inference given observation $\obs$ and preferences $\prefs$ by setting its plan to have density 
\begin{equation} \label{eq:active-inference-functional}
\text{\normalfont plan}(\pi) := \softmax(\log E(\pi) - F(\pi) - G(\pi))
\end{equation}
where $\softmax$ denotes a softmax over $\pi \in P$.
\end{theorem}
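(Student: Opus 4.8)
The plan is to derive the softmax formula \eqref{eq:active-inference-functional} by starting from the \emph{exact} expression for the plan in Proposition~\ref{prop:exact-act-inf} and replacing each of its two intractable scalar factors by the corresponding free-energy surrogate supplied by Corollary~\ref{cor:open-VFE-EFE}. Concretely, recall that by Proposition~\ref{prop:exact-act-inf} the exact Pearl-style plan has density
\[
\text{\normalfont plan}(\pi) = \Norm{\pi}\left( E(\pi)\,(\obs\circ\Mpres(\pi))\,(\prefs\circ\ucond{M}{\obs}(\pi)) \right),
\]
a product of three scalar factors over $P$: the habit prior $E(\pi)$, the present-observation evidence $\obs\circ\Mpres(\pi)$, and the future-preference fit $\prefs\circ\ucond{M}{\obs}(\pi)$. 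The habit term is already tractable and is carried through unchanged; the perception and prediction steps are designed precisely to approximate the other two factors.

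For the present-observation factor I would invoke the VFE approximation. The perception step produces, for each $\pi$, a posterior $q(\pi)$ approximating the VFE update of $\Mpres$ by $\obs$, with VFE value $F(\pi)$. Applying Proposition~\ref{prop:VFE-approx} (equivalently the first approximation of Corollary~\ref{cor:open-VFE-EFE}) to the joint distribution $\Mpres(\pi)$ over $\Spres,\Opres$, and then discarding $\Spres$ (using that $q(\pi)$ is normalised), yields the scalar log-approximation
\[
e^{-F(\pi)} \ \approx \ \sum_{s}\ \ExpVal_{o\sim\obs}\Mpres(s,o\mid\pi) \ = \ \obs\circ\Mpres(\pi),
\]
so $e^{-F(\pi)}$ approximates the second factor.

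The third factor is the main obstacle. The prediction step forms the channel $\Mq$ by composing the perception posterior $q$ with the future transition and observation channels $B',A'$, and then computes $G(\pi)=\EFE(\Mq(\pi),\prefs)$. By Proposition~\ref{prop:EFE-approx} (the second approximation of Corollary~\ref{cor:open-VFE-EFE}) one has $e^{-G(\pi)} \lessapprox \ExpVal_{o'\sim\Mq(\pi)}\prefs(o')$, i.e. $e^{-G(\pi)}$ approximates $\prefs$ composed with the $\Ofuture$-marginal of $\Mq(\pi)$. To match this to the exact factor $\prefs\circ\ucond{M}{\obs}(\pi)$ one must argue that this marginal reproduces the exact Pearl-updated future prediction $\ucond{M}{\obs}$ marginalised on $\Ofuture$ — that is, that propagating the approximate present posterior $q$ forward through $B',A'$ recovers the exact future marginal. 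The delicate point is thus the \emph{chaining} of two logically independent approximations (the perception approximation $q\approx$ the true present posterior, and the EFE log-approximation): one must check that their composite still tracks the exact future-preference term, that the bound direction $\lessapprox$ and the log-approximation are applied consistently, and that no $\pi$-dependent factor is silently discarded.

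Granting these two surrogates, I would finally substitute them into the exact formula and use that $\Norm{\pi}$ is insensitive to $\pi$-independent constants together with the definition of the softmax as $\softmax(g)(\pi)=\Norm{\pi}e^{g(\pi)}$:
\[
\text{\normalfont plan}(\pi) \ \approx \ \Norm{\pi}\left( E(\pi)\,e^{-F(\pi)}\,e^{-G(\pi)} \right) = \Norm{\pi}\,e^{\log E(\pi) - F(\pi) - G(\pi)} = \softmax\!\left(\log E(\pi) - F(\pi) - G(\pi)\right),
\]
which is exactly the claimed expression \eqref{eq:active-inference-functional}. The only remaining care is to note that the normalisation absorbs the multiplicative error incurred by each log-approximation, so that the product of the two approximations yields a valid approximation of the normalised plan.
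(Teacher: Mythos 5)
Your proposal is correct and follows essentially the same route as the paper: start from the exact Pearl-style expression of Proposition~\ref{prop:exact-act-inf}, substitute the two free-energy surrogates of Corollary~\ref{cor:open-VFE-EFE} for the observation-evidence and preference-fit factors, and absorb the result into a softmax via normalisation. The chaining issue you flag is already resolved by the form of the first approximation in Corollary~\ref{cor:open-VFE-EFE}, which approximates the whole morphism $P \to S$ together with its scalar weight (not merely the scalar $\obs \circ \Mpres(\pi)$), so that composing with $B'$, $A'$ and $\prefs$ carries the approximation through to the future-preference term.
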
 

\begin{proof}
We have:
\[
\tikzfig{actinf-approx-1}
\]
where we used Corollary \ref{cor:open-VFE-EFE} in both approximation steps. Thus defining our plan as on the left-hand below yields an approximate update:
\[
\tikzfig{plan-norm}
\]
But finally, note that the left-hand distribution is precisely given by \eqref{eq:active-inference-functional}. Indeed for each $\pi \in P$, corresponding to a sharp effect on $P$, we have: 
\[
\tikzfig{softmax-derivation} \quad = \quad \habits(\pi) \  e^{-\VFE(\pi)} \ e^{-\EFE(\pi)}
\]
Hence the normalisation of the above is precisely the softmax expression \eqref{eq:active-inference-functional}.
\end{proof}

This formula for active inference via free energy, though frequently used, is usually only justified in a fairly heuristic manner \cite{parr2022active}. Previous accounts rely on the less clear notion of treating EFE as a `prior' to updating\footnote{Despite the fact EFE is not straightforwardly a component of the generative model, and requires inference over present states $S$ to be calculated first, rather than prior to them}. Here we have instead seen how the expression can be derived from a direct diagrammatic argument, directly from the structure of the generative model.

\section{Compositionality of Free Energy} \label{sec:comp-FE}

A crucial aspect of active inference is the idea that an agent can be understood to minimise free energy at all levels, so that it may be seen to globally minimise free energy in its generative model by minimising free energy within each component.

To formalise this idea we must first introduce a notion of free energy for open models. For this we will make use of the following graphical notation for the surprise. 
\begin{definition}
Given any effect $e$ on $X$ in $\MatR$, corresponding to a function $e \colon X \to \mathbb{R}^+$, we denote by 
\[
\tikzfig{logbox}
\]
the function $-\log e (x) \colon X \to (-\infty, \infty]$.
\end{definition}

\begin{remark}
Note that a log-box is no longer an effect within $\MatR$, since when $e(x) = 0$ we will have $-\log e(x) = \infty$. Here we will interpret any diagram involving log-boxes with inputs $X_1, \dots, X_n$ and no outputs as a (formula specifying a) function $X_1 \times \dots \times X_n \to (-\infty, \infty]$. Composing boxes in the diagram amounts to summation over wires, as for $\MatR$. Given two such diagrams $D_1, D_2$ we write $D_1 + D_2$ for the function given by their point-wise sum as functions. In future it would be interesting to explore a formal categorical semantics for log-boxes. 
\end{remark} 

In particular we can apply a log box to any distribution $\omega$ in $\MatR$ by first turning it into an effect, yielding the surprise $S(\omega)(x) = -\log \omega(x)$. 
\[
\tikzfig{statetologbox}
\]
Similarly for a pair of distributions $\omega, \sigma$ we have 
\begin{equation} \label{eq:surprise-logbox}
\Sp\left(\tikzfig{omegastatealt} \ , \ \tikzfig{sigmastatealt} \right) \quad = \quad \tikzfig{logsurprise} .
\end{equation}

From the properties of the logarithm, one may verify that log-boxes then satisfy the following compositional properties. 

\begin{lemma} \label{lem:log-axioms}
For all effects $d, e$ and sharp states $x$ the following hold.
\begin{enumerate} 
\item \label{enum:lbox1}
\[
\tikzfig{logbox1}
\]
\item \label{enum:lbox2}
\[
\tikzfig{logbox2}
\]
\item \label{enum:lbox3}
\[
\tikzfig{logbox3}
\]
\item \label{enum:lbox4}
\[
\tikzfig{logboxsharp}
\]
\end{enumerate}
\end{lemma}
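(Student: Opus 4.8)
The plan is to treat every displayed identity as an equation between the functions $X_1 \times \dots \times X_n \to (-\infty,\infty]$ that the log-box diagrams denote, exactly as specified in the preceding remark, and to verify each one pointwise using only the elementary identity $\log(ab) = \log a + \log b$ together with the convention $-\log 0 = \infty$. Since composing log-boxes in a diagram amounts to summation over the connecting wires, and $D_1 + D_2$ is by definition the pointwise sum of functions, each statement of the lemma unwinds to a routine algebraic fact about the logarithm; the only genuine content is bookkeeping at the points where an effect vanishes.

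First I would fix the semantics explicitly: a log-box on an effect $e$ denotes $x \mapsto -\log e(x)$, a tensor $d \otimes e$ of effects denotes the product $d(x)e(y)$, and copying an input $X$ before applying $d$ and $e$ denotes the product $d(x)e(x)$. With this in hand, the product-to-sum identities (items \ref{enum:lbox1}--\ref{enum:lbox2}) follow immediately: in the tensor case $-\log\big(d(x)e(y)\big) = -\log d(x) - \log e(y)$, and in the copied case $-\log\big(d(x)e(x)\big) = -\log d(x) - \log e(x)$, which are precisely the pointwise sums represented by the right-hand diagrams. For item \ref{enum:lbox3}, whose left-hand side is again built from effects composed via copying and tensoring, I would expand both sides as functions and read off the equality from the same additivity of $\log$. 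For the sharp-state identity \ref{enum:lbox4}, I would use that precomposing with a sharp state $x$ (the point distribution $\delta_x$) evaluates the denoted function at $x$; the log-box therefore commutes with this evaluation and both sides reduce to the scalar $-\log e(x)$.

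The main obstacle, and the only step requiring real care, is the treatment of the value $\infty$: whenever an effect takes the value $0$ at some point, the corresponding log-box equals $\infty$ there, so I must check that both sides of each identity agree under the extended arithmetic of $(-\infty,\infty]$ — in particular that $\infty + c = \infty$ for finite $c$, and that a product of nonnegative reals vanishes exactly when one factor does, so that $-\log\big(d(x)e(x)\big) = \infty$ precisely when $-\log d(x) - \log e(x) = \infty$. Once this convention is adopted consistently, each identity holds on the nose and the lemma follows. I would add that a cleaner, convention-free derivation would come from the formal categorical semantics for log-boxes alluded to in the preceding remark, but that machinery is not needed here.
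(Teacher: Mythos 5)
Your proof is correct and takes essentially the same approach as the paper, which likewise reduces each diagrammatic identity to a pointwise fact about the logarithm ($\log(de)=\log d+\log e$ for the copy and tensor cases, $\log 1=0$ for the unit, and evaluation/partial evaluation at sharp states for item \ref{enum:lbox4}); your additional care with the value $\infty$ where an effect vanishes is a sound refinement the paper leaves implicit. The only slip is in guessing the figure contents: item \ref{enum:lbox2} is the unit identity $-\log 1=0$ rather than a second product-to-sum law, and item \ref{enum:lbox4} keeps one input free (both sides being $y\mapsto -\log d(x,y)$) rather than reducing to a scalar, but both are trivially covered by your pointwise framework.
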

\begin{proof}
Plugging inputs $x, y$ into each equation they reduce to the following respective properties of the logarithm. \eqref{enum:lbox1}: $\log(d(x)e(x)) = \log(d(x)) + \log(e(x))$. \eqref{enum:lbox2}: $\log(1) = 0$. \eqref{enum:lbox3}: $\log(d(x)e(y)) = \log d(x) + \log e(y)$. \eqref{enum:lbox4} holds by definition, since both diagrams are given by $y \mapsto \log d(x,y)$. 
\end{proof} 

The following properties then follow from diagrammatic reasoning, using the relation between caps and copying. 

\begin{proposition} \label{prop:logboxchanrule}
\ 
\begin{enumerate}
    \item \label{enum:surprisetensor}
    For all effects $d, e$ and normalised states $\sigma, \omega$:
\[ 
\tikzfig{surprisetensor}
\]
In particular, entropy is additive across parallel composition: $H(\sigma \otimes \omega) = H(\sigma) + H(\omega)$.

\item \label{enum:de-ext}
For all effects $d, e$:
\[
\tikzfig{logboxwider}
\]
\item \label{enum:Logboxchannelrule}
For all morphisms $f, g$:
\[
\tikzfig{logboxchannelrule2}
\]
\end{enumerate}
\end{proposition}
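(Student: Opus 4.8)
The plan is to reduce all three statements to the pointwise behaviour of the logarithm, which is already packaged diagrammatically in Lemma \ref{lem:log-axioms}, and then to transport those identities across the copy/cap structure using the compatibility equations \eqref{eq:mult-map} together with cancellativity of caps \eqref{eq:cancel-caps}. The key reduction device throughout is part \eqref{enum:lbox4} of Lemma \ref{lem:log-axioms}, which lets me feed sharp states through a log-box: since a diagram in $\MatR$ is determined by its values on sharp inputs, it suffices to verify each claimed equality after plugging a sharp state into every wire, at which point the diagrams evaluate to honest real numbers and the scalar logarithm identities recorded in Lemma \ref{lem:log-axioms}\eqref{enum:lbox1}--\eqref{enum:lbox3} apply directly.

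For part \eqref{enum:surprisetensor} I would first apply the tensor rule Lemma \ref{lem:log-axioms}\eqref{enum:lbox3} to split the single log-box on $d \otimes e$ into a log-box $-\log d$ on the first wire and $-\log e$ on the second. Because $\sigma \otimes \omega$ is a product state, its two legs plug independently into these two log-boxes, so composing and using the log-box form of surprise \eqref{eq:surprise-logbox} yields exactly the sum $\Sp(\sigma,d) + \Sp(\omega,e)$. The additivity of entropy is then the special case in which the effects are the states themselves; here the one genuine check is that the effect obtained by flipping $\sigma \otimes \omega$ via \eqref{eq:eff-from-state} agrees with the tensor of the individual flips of $\sigma$ and $\omega$, which is precisely the cap--tensor compatibility displayed after \eqref{eq:mult-map}. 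With that identification in hand, $H(\sigma \otimes \omega) = \Sp(\sigma,\sigma) + \Sp(\omega,\omega) = H(\sigma) + H(\omega)$.

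Part \eqref{enum:de-ext} I expect to be routine: it is the same tensor/additivity identity as \eqref{enum:surprisetensor}, only stated for log-boxes sitting on wider (bundled) wires, so it follows by the same application of Lemma \ref{lem:log-axioms}\eqref{enum:lbox3} after regrouping wires, with no new ingredient beyond functoriality of $\otimes$.

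The hard part will be the channel rule \eqref{enum:Logboxchannelrule}, since it concerns genuine morphisms $f,g$ rather than effects, and the log-box must be slid through the copy node that glues them together. My plan is to flip the relevant outputs into inputs using caps, reduce to sharp inputs on every wire by Lemma \ref{lem:log-axioms}\eqref{enum:lbox4}, and then exploit the copy-compatibility of caps from \eqref{eq:mult-map} --- so that a sharp state fed into a copy is simply duplicated and the cap/copy pair cancels by \eqref{eq:cancel-caps} --- to rewrite the composite as the bare product of the matrix entries of $f$ and $g$ with no intervening summation. Once the composite has collapsed to such a product, Lemma \ref{lem:log-axioms}\eqref{enum:lbox1} converts the log of the product into the sum of the logs, which is exactly the right-hand side. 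The delicate point to get right is the bookkeeping of caps and copies when turning the composite channel into an effect: one must verify that the flip of the copy-composite of $f$ and $g$ is expressed as the flips of $f$ and $g$ connected through a single cap/copy, and it is here that cancellativity \eqref{eq:cancel-caps} does the real work, while everything else is the pointwise logarithm identity already established in Lemma \ref{lem:log-axioms}.
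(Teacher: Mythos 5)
Your proposal is correct and follows essentially the same route as the paper: reduce everything to the pointwise logarithm identities of Lemma \ref{lem:log-axioms} (which the paper itself proves by plugging sharp states into every wire), and obtain part \eqref{enum:Logboxchannelrule} by turning $f$ and $g$ into effects via caps so that the effect-level statement \eqref{enum:de-ext} applies. Two small misattributions, neither fatal: part \eqref{enum:de-ext} is not merely the tensor rule \eqref{enum:lbox3} on bundled wires, since the two effects overlap on a shared copied wire and so the product rule \eqref{enum:lbox1} and unit rule \eqref{enum:lbox2} are also needed (your sharp-state reduction covers this automatically); and in part \eqref{enum:Logboxchannelrule} the real work is done by the cap/copy compatibility equations \eqref{eq:mult-map} together with the sharp-state equations \eqref{eq:copy-points} and \eqref{eq:sharp-state-eff}, not by cancellativity of caps, which is an implication used to conclude equality of morphisms rather than a rewrite rule and plays no role in the paper's proof.
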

\begin{proof}
\eqref{enum:surprisetensor} follows from Lemma \ref{lem:log-axioms} \eqref{enum:lbox3} since:
\[
\tikzfig{stensproof}
\]
\eqref{enum:de-ext} follows from Lemma \ref{lem:log-axioms} \eqref{enum:lbox1}, \ref{enum:lbox2} and \ref{enum:lbox3} since: 
\[
\tikzfig{lbwideproof}
\]
\eqref{enum:Logboxchannelrule} is a special case of \eqref{enum:de-ext} where we define the effects $d, e$ by composing $f, g$ with caps on their output, respectively, since using the relation between caps and copying we see that:
\[
\tikzfig{logboxchanruleproof}
\]
\end{proof}


Now, by construction, for any joint distributions $M, Q$ over $S, O$ the free energy is given by:
\[
\FE 
\left(
\tikzfig{Q-gens} \ , \tikzfig{M-gens}\right)
\quad 
=
\quad
\tikzfig{logboxQ}
\]
Hence for any joint distribution $M$ over $S, O$ and distributions $q, \obs$ over $S, O$ respectively, the VFE is given by:
\begin{align*}
\VFE\left( \tikzfig{q} \right)
\quad
= 
\quad
\tikzfig{logboxvfe}
\end{align*}

We can use this to define a generalisation of VFE for (the channels induced by) open generative models.

\begin{definition}[Open VFE]
Given a channel $M \colon \Inpcfe \to S, O$, distribution $q$ over $\Inpcfe, S$ and distribution $\obs$ over $O$, we define the \emph{open Variational Free Energy} as 
\begin{equation} \label{eq:open-VFE}
\VFE\left(
\tikzfig{MISO} \ , \ 
\tikzfig{qps2} \ , \ \tikzfig{obs3} \right)
\quad
:= 
\quad
\tikzfig{openVFE}
\end{equation}
\end{definition}

In the special case where $\Inpcfe$ is trivial, the open VFE coincides with the usual VFE.


We can use the compositional properties of log boxes to show that this form of free energy is compositional, in an appropriate sense. First consider the following two ways in which we may compose open models. 

Consider a pair of open models $\modelM_1, \modelM_2$ with inputs $\Inpcfe_1, \Inpcfe_2$, outputs $O_1, O_2$ and hidden states $S_1, S_2$, respectively, such that $O_1 = \Inpcfe_2$. We can compose these in sequence into a single open model $\modelM$ from $\Inpcfe_1$ to $O_2$, with $S_1, S_2, O_1$ as its hidden states, with induced total channel $M$ from $\Inpcfe_1$ to the remaining variables given as below. 
\begin{equation} \label{eq:sequential-openmodel}
\tikzfig{modelcomp1}
\qquad 
\text{ with total channel }
\qquad 
\tikzfig{Mcompchan}
\end{equation}
Formally, the left-hand diagram is a composition in the category of open causal models; see \cite[Sec. 5]{lorenz2023causal}.

We can also compose open models in parallel. Given two open models $\modelM_i$ with inputs $\Inpcfe_i$, outputs $O_i$ and hidden states $S_i$, for $i=1,2$ we can define an open model $\modelM$ with both sets of inputs, outputs and hidden states with induced induced total channel $M$ from $\Inpcfe_1, \Inpcfe_2$ to the remaining variables given as below. 
\begin{equation} \label{eq:parallel-openmodel}
\tikzfig{modelcomp2}
\qquad 
\text{ with total channel }
\qquad 
\tikzfig{Mcompchan2}
\end{equation}
For each of these forms of composition of open models, we wish to establish that free energy is compositional in that the VFE of the open model $\modelM$ is determined from the VFE of its constituents. This ensures that locally minimising VFE (within each of sub-component) can achieve global VFE minimisation also.




\begin{theorem} \label{thm:compFE1}
For the sequential composite model $\modelM$ in \eqref{eq:sequential-openmodel} with $O_1=\Inpcfe_2$, with total channel $M$, and any distribution $\obs$ over $O_2$ and distributions $q_1, q_2$, the following holds:
\begin{equation} \label{eq:comp-openFE1}
\VFE \left( \tikzfig{Mcompchana}  , 
 \tikzfig{qtens} \
,  \tikzfig{obso2} \right)
\ \ = 
\ \ 
\VFE \left( \tikzfig{M1} \ , \tikzfig{q1} \ , \tikzfig{obso1} \right)
\ \ + \ \ 
 \VFE \left( \tikzfig{M2} \ , \tikzfig{q2} \ , \tikzfig{obso2} \right)
\end{equation}
where 
\begin{equation} \label{eq:obs1induced}
\tikzfig{obso1fla}
\end{equation}
\end{theorem}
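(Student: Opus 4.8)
The plan is to reduce the identity \eqref{eq:comp-openFE1} entirely to the compositional log-box identities of Proposition \ref{prop:logboxchanrule}, by first rewriting every occurrence of open VFE through its log-box definition \eqref{eq:open-VFE}. On the left-hand side I would substitute the factorisation of the total channel $M$ displayed in \eqref{eq:sequential-openmodel}: $M$ is built by feeding the $O_1$ output of $M_1$ into $M_2$ while copying $O_1$ so that it is retained as a (now hidden) output of the composite. Consequently the surprise box $-\log M$ sitting inside the open VFE is the surprise of a \emph{composite} channel, and the engine of the whole argument is Proposition \ref{prop:logboxchanrule}\eqref{enum:Logboxchannelrule}, whose job is exactly to split such a surprise into $-\log M_1$ plus $-\log M_2$, with the shared wire $O_1$ threaded through both summands.

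The key steps, in order, are then as follows. First, apply Proposition \ref{prop:logboxchanrule}\eqref{enum:Logboxchannelrule} to the copied $O_1$ wire to break $-\log M$ into the sum of the $M_1$-surprise and the $M_2$-surprise. Second, since the posterior for the composite is the product $q_1 \otimes q_2$, apply Proposition \ref{prop:logboxchanrule}\eqref{enum:surprisetensor} (with Lemma \ref{lem:log-axioms}) to split its surprise as $\Sp(q_1) + \Sp(q_2)$. After these two splittings the whole left-hand diagram is a point-wise sum of log-boxes, and the remaining work is purely to regroup them into two clusters: the cluster mentioning only $M_1$, $q_1$ and the wires $\Inpcfe_1, S_1, O_1$, and the cluster mentioning $M_2$, $q_2$ and $O_1, S_2, O_2$. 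Because $q_1 \otimes q_2$ factors across exactly the variable partition induced by the two models — their only shared variable being $O_1$, which lies in $q_2$'s domain — the two clusters genuinely separate with no cross terms. The second cluster, read against the observation $\obs$ over $O_2$, is by inspection the open VFE diagram of $M_2$ and so equals $\VFE(M_2, q_2, \obs)$.

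The main obstacle is the first cluster and the treatment of the shared $O_1$ wire. In the composite $O_1$ is \emph{hidden}, hence averaged against the $O_1$-marginal of $q_2$, whereas in $\VFE(M_1, q_1, \obs_{O_1})$ the variable $O_1$ is \emph{observed}, hence averaged against the soft observation $\obs_{O_1}$. The crux is therefore to show that these coincide: I would verify that the diagram \eqref{eq:obs1induced} defining $\obs_{O_1}$ is precisely the marginal of $q_2$ on $O_1$, obtained by discarding $S_2$, so that the first cluster is exactly $\VFE(M_1, q_1, \obs_{O_1})$. The delicate part of this verification is purely diagrammatic bookkeeping around the copy map on $O_1$ and the caps introduced when invoking the channel rule; this is handled by repeating the caps-versus-copying manipulation used in the proof of Proposition \ref{prop:logboxchanrule}\eqref{enum:Logboxchannelrule}. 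Once the induced observation is identified in this way, summing the two clusters yields \eqref{eq:comp-openFE1}, with $\obs_{O_1}$ playing the role of the `bottom-up' message that $\modelM_2$ passes down to $\modelM_1$.
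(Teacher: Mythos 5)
Your proposal is correct and follows essentially the same route as the paper's own proof: expand the composite open VFE via the log-box definition, split the surprise of the composite channel with Proposition~\ref{prop:logboxchanrule}\eqref{enum:Logboxchannelrule} (taking $f=M_1$, $g=M_2$) and the posterior entropy with Proposition~\ref{prop:logboxchanrule}\eqref{enum:surprisetensor}, then regroup, using normalisation of $\obs$ and $q_1$ to drop spectator factors and identifying the induced observation $\obs_{O_1}$ in \eqref{eq:obs1induced} as the $O_1$-marginal of $q_2$. No gaps.
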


Intuitively, \eqref{eq:obs1induced} expresses the way in which the beliefs about inputs $\Inpcfe_2$ in $M_2$ are passed down to the model $\modelM_1$ as observations in $O_1=\Inpcfe_2$. 

\begin{proof}
After rearranging some wires, we have that
\begin{align*}
\VFE (M,q,\obs)  \ \ \ 
 &= \tikzfig{comp-proof2}
\end{align*}
where for the first two terms we apply Proposition \ref{prop:logboxchanrule} with $f=M_1$ and $g=M_2$, along with the fact that $\obs$ and $q_1$ are normalised, and the second two terms are from Proposition \ref{prop:logboxchanrule} \eqref{enum:surprisetensor}. But this is precisely the right-hand side of \eqref{eq:comp-openFE1}.
\end{proof}

Next let us turn to the parallel composite of open models. 

\begin{theorem} \label{thm:compFE2}
For any channels and distributions $M_i, q_i, \obs_i$ for $i=1,2$, the following holds.
\begin{equation} \label{eq:comp-openFE2}
\VFE \left( 
\tikzfig{M1M2} \
 ,  \tikzfig{qtens} \ ,  \tikzfig{obstens} \right)
\ \ = 
\ \ 
\VFE \left( \tikzfig{M1} \  , \tikzfig{q1} \ , \tikzfig{obso1} \right)
\ \ + \ \ 
 \VFE \left( \tikzfig{M2} \ , \tikzfig{q2} \ , \tikzfig{o2} \right)
\end{equation}
\end{theorem}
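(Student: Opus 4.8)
The plan is to proceed exactly as in the proof of Theorem \ref{thm:compFE1}, but the parallel case should be simpler since there is no shared wire $O_1 = \Inpcfe_2$ to contract. First I would expand the left-hand side using the definition of open VFE \eqref{eq:open-VFE} applied to the total channel $M_1 \otimes M_2$, together with the product state $q_1 \otimes q_2$ over $\Inpcfe_1, \Inpcfe_2, S_1, S_2$ and the product observation $\obs_1 \otimes \obs_2$ over $O_1, O_2$. This yields a single diagram built from log-boxes in which the two components sit side by side.

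The key step is then to split every log-box appearing in this diagram across the tensor product. The log-box on the total channel $M_1 \otimes M_2$ decomposes as a sum of a log-box on $M_1$ and a log-box on $M_2$ by Proposition \ref{prop:logboxchanrule}\eqref{enum:Logboxchannelrule} (equivalently by \eqref{enum:de-ext}), while the entropy-type term coming from $q_1 \otimes q_2$ splits by additivity of surprise across parallel composition, Proposition \ref{prop:logboxchanrule}\eqref{enum:surprisetensor}. After this splitting the open VFE becomes a point-wise sum $D_1 + D_2$ of two diagrams, where $D_i$ involves only the data $M_i, q_i, \obs_i$ of the $i$-th component, except that the wires belonging to the other component still appear as spectators.

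To finish I would collapse these spectator wires. In the summand $D_1$ the only remaining occurrence of the second component is the composite of the normalised states $q_2$ and $\obs_2$ with discarding; since discarding a normalised state is the scalar $1$, these factors disappear and $D_1$ reduces to exactly $\VFE(M_1, q_1, \obs_1)$, and symmetrically $D_2$ reduces to $\VFE(M_2, q_2, \obs_2)$. Here the assumption that all four states $q_1, q_2, \obs_1, \obs_2$ are normalised is precisely what is needed, just as normalisation of $\obs$ and $q_1$ was used at the end of the proof of Theorem \ref{thm:compFE1}. Summing the two pieces then gives the right-hand side of \eqref{eq:comp-openFE2}.

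I expect the main (though modest) obstacle to be purely the diagrammatic bookkeeping: keeping track of which wires are discarded in each summand and invoking the correct normalisation condition so that the spectator contributions collapse to $1$ rather than to some nontrivial marginal. Because the composition here is parallel rather than sequential, no contraction along a shared wire is required, so I anticipate no genuine difficulty beyond this bookkeeping, and the identity \eqref{eq:comp-openFE2} should follow directly from the additive splitting of the log-boxes.
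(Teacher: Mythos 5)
Your argument is correct and is essentially the paper's own proof: the paper likewise just expands the open VFE of the parallel composite and splits the log-box terms via Proposition \ref{prop:logboxchanrule}\eqref{enum:surprisetensor}, whose normalised-states hypothesis is exactly what makes your ``spectator'' wires collapse to the scalar $1$. The only cosmetic difference is that you additionally invoke parts \eqref{enum:de-ext}/\eqref{enum:Logboxchannelrule} for the channel log-box, whereas the paper folds the whole splitting into the single application of \eqref{enum:surprisetensor}.
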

\begin{proof}
The left-hand side is given by 
\[
\VFE (M,q,\obs)  \ \ \ 
= \tikzfig{openVFEtensor}
\]
which is precisely the right-hand side, where we applied Proposition \ref{prop:logboxchanrule} \eqref{enum:surprisetensor}.
\end{proof}

The above results tell us that an agent with an overall generative model may minimise VFE by minimising VFE locally within each sub-model, an important property underlying the application of the free energy to all levels of a system.

\section{Outlook} \label{sec:futurework}

In this article we have aimed to give a concise formulation of active inference in terms of string diagrams interpreted in a cd-category $\catC$, focusing on the case of finite discrete systems as described by $\catC=\MatR$. In particular we were able to derive the formula for approximate active inference via free energy minimisation purely from the high-level structure of a generative model undertaking active inference, and derived a compositionality property for free energy. 

However these are just the first steps towards a fully compositional account of intelligent behaviour according to predictive processing, and there are many directions for future work.

\para{Message passing}
So far we only studied active inference at a high-level, saying that an agent must, for each observation, arrive at an updated distribution $q$ by free energy minimisation, without discussing how this is to be carried out. In PP this minimisation is normally achieved via so-called `message passing' algorithms, such as `variational' and `marginal' message passing \cite{parr2019neuronal}. These are defined on undirected graphical models described by Forney factor graphs, induced by a generative model. In future it would be interesting to include a categorical account of message passing within our framework, to complete our description of active inference. 

\para{Continuous settings}
Another technical matter would be to extend the treatment of PP beyond the finite case to further cd-categories describing continuous settings, such as a suitable category of Gaussian probabilistic processes, which are widely employed in PP under the `Laplace assumption'. One issue is in extending our treatment of minimal conditionals to such continuous settings, where they are not as straightforwardly defined.



\para{Causal reasoning}
We have here pointed out that an generative model may be seen precisely as a causal model  \cite{pearl2009causality}. In future it would be interesting to explore how an agent may carry out causal reasoning on its model using concepts from the causal model framework such as `interventions', as treated graphically in \cite{lorenz2023causal}, and how such reasoning relates to active inference.

\para{Approximations}
The treatment of active inference via free energy in Section \ref{sec:approx-actinf} relied on applying various approximation steps from Section \ref{sec:free-energy} to parts of the diagram. Certainly more could be done to set bounds on how well these approximations hold, including how they extend from part of a diagram to the whole generative model. 

\para{Updating within PP}
The categorical perspective led us to naturally consider soft observations (given by distributions) rather than the usual sharp ones (given by points), which come with distinct notions of Jeffrey and \pearl updating (Section \ref{sec:updating}), as well our new notion of VFE updating (Section \ref{sec:free-energy}). While we were able to describe active inference via the latter two forms of updating, it would be interesting to compare against \jeffrey updating and establish which form of exact updating is most naturally considered (and approximated) in PP. That is, given that both forms of updating have different goals \cite{jacobs2019mathematics}, which one (if either) is approximately carried out by the brain? This question was also raised in \cite{di2023evidential}. 

 We note that \pearl updating can be more generally defined with respect to any effect (see e.g. \cite{di2023evidential}), i.e. any (not necessarily normalised) function. There is disagreement between active inference and reinforcement learning (RL) in whether an agent's preferences should, rather than as a distribution as in active inference, be simply modelled by a function $\prefs \colon F \to \mathbb{R}^+$ assigning a `value' in $\mathbb{R}^+$ to each possible future observation, i.e. as an effect $\prefs$ on $F$ \cite{friston2009reinforcement,tschantz2020reinforcement}. In this case \pearl updating may be the most natural to treat planning. In contrast, \jeffrey updating may be most fitting for perception, with an observation $\obs$ naturally encoded as a distribution i.e. a `fuzzy point' in $O$.

 


\para{Consciousness in PP}
Various proposals have been put forward for how PP and active inference can be related to consciousness. Continuing from previous work from two of the authors on IIT \cite{kleiner2021mathematical,TullKleiner}, in future we hope to account for these proposals within our graphical account of active inference.


\para{Categorical modifications of PP}
Beyond simply recasting previous results in PP categorically, in future one may also study what new insights the compositional perspective may bring to PP and active inference, and to connect the work to ongoing research within categorical cybernetics \cite{smithe2021cyber,capucci2021towards} and more broadly to the research programme of compositional intelligence.

\bibliographystyle{alpha}
\bibliography{pp.bib}

\end{document}